\theoremstyle{plain}
\newtheorem{theorem}{Theorem}[section]
\newtheorem{lemma}[theorem]{Lemma}
\theoremstyle{definition}
\newtheorem{definition}[theorem]{Definition}
\newtheorem{proposition}[theorem]{Proposition}
\theoremstyle{remark}
\newtheorem{remark}[theorem]{Remark}
\newtheorem{comment}[theorem]{Remark}
\newcommand{\argmax}{{\rm argmax}}
\newcommand{\E}{{\rm \bf E}}
\newcommand{\prob}{{\rm \bf P}}
\newcommand{\dN}{\mathbb{N}}
\newcommand{\ep}{\varepsilon}
\newcommand{\eps}{\varepsilon}
\newcounter{figurecounter}
\begin{document}

\title{Identifying the Deviator%
\thanks{We thank Orin Munk, Asaf Nachmias, Bhargav Narayanan, Ron Solan, and David Steinberg for useful discussions. 
Alon's research was supported in part by the National Science Foundation, Grant DMS-1855464, and BSF grant 2018267.
He acknowledges the support of the National Science Foundation, Grant DMS-2103154.
Solan acknowledges the support of the Israel Science Foundation, Grant \#217/17. Shmaya acknowledges the support of the National Science Foundation, Grant 89371-1164105-1.
}}

\author{Noga Alon%
\thanks{Department of Mathematics, Princeton University, Princeton,
NJ 08544, USA and Schools of Mathematical Sciences and Computer Science,
Tel Aviv University, Tel Aviv, Israel.
e-mail: {nalon@math.princeton.edu}},
Benjamin Gunby%
\thanks{Mathematics Department, Rutgers University, Piscataway, NJ 08854, USA. e-mail: bg570@connect.rutgers.edu},
Xiaoyu He%
\thanks{Department of Mathematics, Princeton University, Princeton, NJ 08544, USA. e-mail: xiaoyuh@princeton.edu},\\
Eran Shmaya%
\thanks{Department of Economics, Stony Brook University, NY 11794, USA. e-mail: eran.shmaya@stonybrook.edu}, and
Eilon Solan%
\thanks{The School of Mathematical Sciences, Tel Aviv
University, Tel Aviv 6997800, Israel. e-mail: eilons@tauex.tau.ac.il}
}

\maketitle

\begin{abstract}
A group of players 
are supposed to follow 
a prescribed profile of strategies. 
If they follow this profile, they will reach 
a given
target.
We show that if the target is not reached because some player deviates, then an outside observer can identify the deviator. 
We also construct identification methods in two nontrivial cases.
%A group of players have a common objective,
%and design a vector of strategies, one for each player,
%to reach it.
%Suppose that the objective was not reached because some player deviated from her prescribed strategy.
%Can an outside observer, who observes the players' choices, identify the deviator?
%We study this problem in the setup of repeated games,
%answer it positively, 
%and describe explicit identification methods in two nontrivial cases.
\end{abstract}

\section{Introduction}
Alice and Bob alternately report outcomes (Heads or Tails), which each of them is supposed to generate by tossing a fair coin. 
If both of them follow through, then the realized sequence of outcomes is random and 
with probability 1
will pass known statistical tests. 
Suppose that the 
sequence of outcomes
does not pass a given test: for example, the long-run frequency of Heads does not converge to $1/2$. 
Can 
an outside observer who observes only the sequence of outcomes 
identify who 
among Alice and Bob did not generate the outcomes by tossing a fair coin?
\color{black}
The answer is positive:
%With regards to the explicit test suggested above, 
\color{black}
if the outcome sequence fails the test, it is easy to identify 
who is responsible 
by checking separately the long-run frequency of Heads in the sequences produced by each player. 

Consider now a different test:
Alice's and Bob's outcomes control a one-dimensional random walk that moves to the right when a Head is reported, and to the left when a Tail is reported. 
Alice controls the odd periods and Bob controls the even periods,
and they pass
the test if the realized walk crosses the origin infinitely often.
We assume that the coin flips are generated sequentially, and that each player
observes the previous flips of the other player before announcing
her or his next outcome.
A version of this test that is also
interesting is the one in which Alice and Bob pass the test if the
walk visits the origin at least once after the initial step. 

Here, too, if Alice and Bob generate the outcomes by tossing fair coins, the realized
sequence
passes the test almost surely. %, since the one-dimensional random walk is recurrent. 
Suppose this does not happen. 
\color{black}
Observing  only the reported outcomes, can one identify
\color{black}
 who among the two is responsible for the test's failure?
 The reader may want to stop at this point and think whether this is possible.

We study a more general form of this question, in which each of several players is supposed to generate outcomes according to some probabilistic rule in every period. 
%Moreover, the players may make their choices alternately, as in the examples above, or simultaneously. 
\color{black}
We are also given a \emph{target set},
which is a set of infinite sequences of outcomes,
and assume that if the 
%two
players follow their probabilistic rules,
then with probability 1 the generated sequence is within the set.
A \emph{blame function} is a function from the complement of the target set to the set of players:
if the generated sequence happens to be outside the target set,
the blame function identifies a player who is proclaimed the deviator.
\color{black}
%
%The \emph{blame function} identifies a player, who is proclaimed the deviator, when the outcome is outside some \emph{target set}. 
In our opening example, the target set is the set of realizations with long-run frequency $1/2$ of Heads,
and in the second example the target set is the set of all realizations where the induced random walk crosses the origin infinitely often.
Given 
\color{black}
the players' probabilistic rules and the
\color{black}
 target set, we seek a blame function with the property that, if only one player deviates from her prescribed rule, then the probability that the realization is outside the target set and 
an innocent
player is blamed is small. 

Our motivation comes from Game Theory.
The most studied solution concept in Game Theory is the Nash equilibrium
(\cite{Nash}),
which is a profile of strategies, one for each player, that is {immune to \emph{unilateral  deviations}, i.e., such that   
no player can profit by deviating from her strategy assuming her opponents stick to their equilibrium strategies.} 
A common way to construct a Nash equilibrium in dynamic games is to 
find a profile of strategies
such that if played together these strategies yield a high payoff to all players, and
punish a player who is caught deviating,
%from their prescribed strategy, 
see, e.g., \cite{AS}. 
\color{black}To implement such a construction, the players must be able to identify the deviator once a deviation is detected, assuming there is only one deviator. 
The goal of our paper is to study the extent to which a deviator can be correctly identified. 
%This is the goal of our paper.

Despite its application to Game Theory, it appears that in its full generality
the question has not been addressed before in the literature. 
The reason is perhaps that most papers about Nash equilibrium in dynamic games 
either study two-player games,
where identifying the deviator does not play any role
(because if a player did not deviate, he knows the other must have deviated), 
or environments in which 
the detection of the deviator is easily done.
The latter include 
(a) environments where the profile of strategies the players should follow is deterministic,
so that a deviation is immediately detected, as in \cite{AS},
and (b) environments where the 
payoff functions are additive over periods (such as stochastic games with the long-run average payoff), 
and the profile of strategies the players should follow has a stationary flavor.
%and players are supposed to choose their actions according to the same distribution, independently of past play. 
In such an environment, the underlying statistical tests are based on long-run frequency of actions, as in our opening paragraph, and detecting deviations is done using a law of large numbers,
see, e.g., \cite{solan99, Vieille00}. 
%is known only in simple cases,
%e.g., when players are supposed to choose their actions according to the same distribution, independently of past play, and punishment is initiated after incorrect long-run frequency of actions. 

Dynamic games where the payoff functions are arbitrary functions of the players' actions have been studied in the past only in the context of two-player zero-sum games, see, e.g., \cite{martin, ms}.
In the last few years,
researchers turned to study Nash equilibria in multiplayer dynamic games with general payoff functions,
which require detecting deviations from more complicated strategies,
see, e.g., \cite{AFPS0, AFPS}.
%
%Recent research about dynamic games with payoff functions that are arbitrary functions of players actions (called ``Borel games'', see [Reference to your book? or some seminal papers like Blackwell and Martin and Mertens-Neyman?])
%require detecting deviations from more complicated strategies. 
Our result turns out to be useful in this area.
Indeed, it 
%Our result enables the detection of 
%deviations from
%more complicated strategies. 
can be used to provide an alternative proof for
the existence of an $\eps$-equilibrium in repeated games with tail-measurable payoffs,
see \cite{FS2022},
and to prove that in every multiplayer stochastic game with finite state and action spaces and with bounded, Borel measurable payoff functions,
for every $\ep > 0$ there is a subgame in which an $\ep$-equilibrium exists, see \cite{FS2022b}.
\color{black}

The paper is also related to statistical decision theory. 
Recall that in a statistical decision problem (See, for example, ~\cite[Chapter 7]{AR}), a statistician observes a realization from a distribution that depends on an unknown parameter, and then makes a decision. The statistician's loss is a function of the unknown parameter and her decision. 
In our problem, a realization is a 
\color{black}
finite or 
\color{black}
infinite sequence of outcomes, 
the parameter space is all distributions 
induced
by possible deviations of a single player, the statistician's decision is a player to blame,
and the statistician loses if an innocent player is blamed. Our blame function is, in the terminology of statistical decision theory, a decision rule. 
The twist is that in our problem the statistician makes a decision only if the realization is outside the target set. 
We show that in our environment the statistician has a decision rule with zero risk
\color{black}
(or with a small risk, in case the realization is finite yet sufficiently long).
\color{black}

%\color{blue}
%/// Eilon: We can delete the next paragraph (and the two references from the bibliography). 
%Reading again the literature on the slippage problem, it is a standard hypothesis testing problem.

%An explicit statistical problem related to our work is the slippage problem, see, e.g., \cite{Mosteller, KT}.
%In this problem, one would like to test whether several given populations have the same mean, and, if not, find the population with the maximal mean.
%Put in the language of our problem, 
%in the slippage problem 
%(a) the selections of each player are independent of past selections and are identically distributed,
%and (b) the only possible deviation is to increase the mean of one's choices.
%\color{black}
%\color{red}
%A related strand of literature develops algorithms to identify faulty server in networks, see, e.g., ???.
%
%\color{black}

The paper is structured as follows.
In Section~\ref{sec:model} we formally describe the model, the concept of a blame function,
and the main results.
In Section~\ref{section:example} we construct blame functions for two nontrivial examples, including the one-dimensional random walk. In Section~\ref{sec:proof} we present a non-constructive proof for the general case,
and we conclude in Section~\ref{sec:discussion}.

\section{Model and Main Results}
\label{sec:model}

Throughout the paper, we fix a finite set $I$ of players,
and for each player $i \in I$ we fix a finite set of actions $A_i$.
Denote by $A = \prod_{i \in I} A_i$ the set of action profiles.

The set of \emph{finite realizations} is the set $A^{<\dN}$ of finite sequences of action profiles.
A \emph{pure strategy} for player~$i$ is a function
$\sigma_i : A^{<\dN} \to A_i$,
and a \emph{behavior strategy} for player~$i$ is a function $\sigma_i : A^{<\dN} \to \Delta(A_i)$, where $\Delta(A_i)$ is the set of probability distributions over $A_i$.
Denote by $z^n = (z^n_i)_{i \in I}$ the action profile selected by the players in period $n$, and 
by
$\sigma_i(z^n_i \mid z^1,z^2,\dots,z^{n-1})$
the probability that $\sigma_i$ selects the action $z^n_i$ in period $n$, provided the action profiles selected in the first $n-1$ periods are $z^1,z^2,\dots,z^{n-1}$.
%Denote the set of all pure (resp. behavior) strategies of player~$i$ by 
%$\Sigma^P_i$ (resp. $\Sigma^B_i$).
%Denote by $\Sigma^B = \prod_{i \in I} \Sigma_i^B$ the set of all vectors of behavior strategies, one for each player.
Denote by $\Sigma_i$ the set of behavior strategies of player $i$.

We endow the space $A^\dN$ of \emph{realizations} with the product topology and the induced Borel $\sigma$-algebra.
Every behavior strategy profile $\sigma=(\sigma_i)_{i\in I}\in \prod_i\Sigma_i$ induces a probability distribution $\prob_\sigma$ over realizations. Abusing notations, for every finite realization $z\in A^{<\dN}$ we denote by
$\prob_\sigma(z)$ the probability that the sequence $z$ will be generated under $\sigma$.
Given a strategy profile $\sigma = (\sigma_i)_{i \in I}$, denote by $\sigma_{-i} = (\sigma_j)_{j \neq i}$ the strategies of all players except player~$i$.

Even though we use game theoretic terminology (players, actions, strategies), we emphasize that we do not define a game between the players, as there are no payoff functions.

\subsection{Testability}

\begin{definition}[Goal]
A \emph{goal} is a pair $(\sigma^\ast,D)$ where $\sigma^\ast\in \prod_i\Sigma_i$ is a behavior strategy profile and $D\subseteq A^{\dN}$  is a Borel set of realizations, which is termed the \emph{target set}.
\end{definition}

The strategy profile $\sigma^*$ is a prescribed way for the players to play.
%The target set $D$ is interpreted as the objective of the players: they wish to generate a realization in $D$.
The target set $D$ is a set of realizations that they are supposed to reach if they follow through their prescribed strategy.
We are interested in cases in which the probability $\prob_{\sigma^*}(D)$ that the prescribed strategy profile attains the target set is 1 or close to 1.

\begin{comment}[Alternate Play]
In our model, players make their choices simultaneously.
Yet, the model can accommodate alternate play.
Indeed, if $I = \{0,1,\dots,|I|-1\}$, and
if for each player~$i$, the strategy $\sigma^*_i$ randomizes only in periods $n$ such that $n \mod |I| = i$,
then in effect the players play alternately.
\end{comment}

\begin{definition}[Blame function]
A \emph{blame function} is a Borel function $f : D^c \to I$. 
\end{definition}

The interpretation of a blame function is that if the players generate a realization $s \in A^\dN$ that misses the target set, player $f(s)$ 
is blamed as the player who did not follow her part of $\sigma^*$.

\begin{definition}[$\delta$-testability]\label{de:testability}
Fix $\delta\ge 0$. 
The goal $(\sigma^*,D)$ is \emph{$\delta$-testable}
if there exists a blame function $f$ such that
for every player $i\in I$ and every strategy $\sigma_i$ for player $i$ we have $\prob_{\sigma_i,\sigma^*_{-i}}(D^c \hbox{ and } \{f \neq i\}) \le \delta$.
\end{definition}

The interpretation of testability is that if some player~$i$ deviates,
then the probability that a different player $j$ 
is blamed
is at most $\delta$.
Thus, with probability 
$\prob_{\sigma_i,\sigma^*_{-i}}(D)$ no player is blamed,
and with probability at least $\prob_{\sigma_i,\sigma^*_{-i}}(D^c)-\delta$ 
the blame function identifies the player who deviated from $\sigma^*$.
We note that if $(\sigma^*,D)$ is $\delta$-testable then 
{
%\[ \prob_{\sigma^*}(D) = 1 - \prob_{\sigma^*(D^c)
%= 1 -\frac{1}{|I|-1}\sum_{i \in I} \prob_{\sigma_i,\sigma^*_{-i}}(D^c \hbox{ and } \{f \neq i\}) \geq 1-\frac{|I|}{|I|-1}\delta. \]
\begin{multline*} \prob_{\sigma^*}(D) = 1 - \prob_{\sigma^*}(D^c)=1-\sum_{i\in I}\prob_{\sigma^\ast}(D^c\text{ and }\{f=i\})=\\
= 1 -\frac{1}{|I|-1}\sum_{i \in I} \prob_{\sigma^*}(D^c \hbox{ and } \{f \neq i\}) \geq 1-\frac{|I|}{|I|-1}\delta, \end{multline*}
where the inequality follows from the fact that, by Definition~\ref{de:testability} with $\sigma_i=\sigma_i^\ast$, we have $\prob_{\sigma^*}(D^c \hbox{ and } \{f \neq i\})\le\delta$ for every $i$.}

\color{black}
\begin{remark}[Deviations by more than one player]
\label{remark:multi}
Our definition of testability involves deviations of a single player.
If more than one player deviates, then identifying all the deviators may not be possible.
For simplicity, consider the following example,
where each of Alice and Bob have two actions, denoted $a$ and $b$,
and the players play for a single period:
Alice's strategy $\sigma^*_A$ selects each action with probability $\frac{1}{2}$,
while Bob's strategy $\sigma^*_B$ selects $a$ with probability 1.
Let the target set $D$ be the set that contains the two realizations in which Bob selects $a$.
While detecting a deviation of Bob is trivial,
detecting a deviation of Alice is not possible.
Thus, if both Alice and Bob deviate, we will not be able to identify Alice's deviation.\end{remark}
\color{black}

\subsection{Main Results}

Our main results are the following.

\begin{theorem}
\label{theorem:approx:testable}
Every goal $(\sigma^\ast,D) $ is $2\sqrt{(|I|-1)\ep}$-testable,
as soon as $\prob_{\sigma^*}(D) > 1-\ep$.
\end{theorem}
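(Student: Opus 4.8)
The plan is to define the blame function by comparing likelihoods of the observed realization under the various single-player deviations, and to control the error probability via a coupling/martingale argument governed by the total variation distance. Concretely, for a finite realization prefix and each player $i$, one can consider the "most incriminating" deviation by player $i$ consistent with that prefix — but since we want a Borel function on all of $D^c$, a cleaner route is the following. For each player $i$, let $\prob_i$ range over the set $\{\prob_{\sigma_i,\sigma^*_{-i}} : \sigma_i \in \Sigma_i\}$ of distributions obtainable by a unilateral deviation of $i$; note $\prob_{\sigma^*}$ lies in this set for every $i$ (take $\sigma_i = \sigma^*_i$). I would define $f(s) = i$ when, heuristically, the realization $s$ "looks much more like a deviation by $i$ than it looks like $\sigma^*$," using a likelihood-ratio threshold. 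The key point is that along $D^c$, since $\prob_{\sigma^*}(D^c) < \ep$, the realization is already somewhat atypical under $\sigma^*$, and we only need to charge the small mass $\ep$ correctly.

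The main steps, in order. First, I would set up the likelihood ratio process: for a deviation $\sigma_i$ by player $i$, the process $M^i_n = \prob_{\sigma_i,\sigma^*_{-i}}(z^1,\dots,z^n) / \prob_{\sigma^*}(z^1,\dots,z^n)$ is a nonnegative martingale under $\prob_{\sigma^*}$ with mean $1$, because player $i$'s deviation only changes the conditional law of $z^n_i$ while all other players still follow $\sigma^*$. Second, I would use this to bound, for each fixed $\sigma_i$, the $\prob_{\sigma^*}$-probability of any event on which $M^i_\infty$ is large — a Ville/maximal inequality gives $\prob_{\sigma^*}(\sup_n M^i_n \ge \lambda) \le 1/\lambda$. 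Third — and this is where the $\sqrt{\ep}$ comes from — I would relate the true error probability $\prob_{\sigma_i,\sigma^*_{-i}}(D^c, f \ne i)$ to a $\prob_{\sigma^*}$-probability by changing measure: on the event $\{f \ne i\}$ the realization is, by design of $f$, not heavily weighted toward $i$'s deviation, so the likelihood ratio there is bounded by some threshold $\lambda$, whence $\prob_{\sigma_i,\sigma^*_{-i}}(E) \le \lambda \, \prob_{\sigma^*}(E)$ for $E \subseteq \{f \ne i\}$, and then $\prob_{\sigma^*}(E) \le \prob_{\sigma^*}(D^c) < \ep$. Balancing the two contributions (the $1/\lambda$ term from large likelihood ratios and the $\lambda \ep$ term from the change of measure), optimizing over $\lambda$, yields a bound of order $\sqrt{\ep}$, and tracking the factor $|I|-1$ from summing/choosing among the other players gives exactly $2\sqrt{(|I|-1)\ep}$.

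A subtlety I would need to handle carefully is the quantifier order: the blame function $f$ must be fixed in advance, before the adversarial deviation $\sigma_i$ is chosen, yet the likelihood ratio $M^i$ depends on $\sigma_i$. The resolution is that $f$ should be defined purely in terms of the prescribed profile $\sigma^*$ and the target set $D$ — for instance, via a function measuring, for each player $i$, how far the realization's behavior in $i$'s coordinate deviates from what $\sigma^*_i$ prescribes (a supremum over martingale-type statistics that do not reference any particular $\sigma_i$) — and then blame the player with the largest such discrepancy. One then checks that \emph{whatever} deviation $\sigma_i$ the adversary picks, its likelihood ratio is dominated by this discrepancy statistic for player $i$, so the change-of-measure bound goes through uniformly. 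I expect this decoupling of $f$ from the adversary's choice to be the main obstacle; the probabilistic estimates themselves are routine martingale inequalities once the right $\sigma^*$-only statistic is identified. It is also worth double-checking the edge cases where $\prob_{\sigma^*}(z) = 0$ for a reachable prefix (so the ratio is $\infty$) and where $|I| = 1$ (the statement is vacuous since there is no innocent player to blame).
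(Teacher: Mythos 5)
There is a genuine gap, and it sits exactly where you flagged the ``main obstacle'': the decoupling of $f$ from the adversary's choice of deviation. Your proposed fix --- a $\sigma^*$-only discrepancy statistic for player $i$ that dominates the likelihood ratio of \emph{every} deviation $\sigma_i$ --- cannot work as described. The pointwise supremum over all deviations of player $i$ of the likelihood ratio at a realized prefix $z$ equals $\prod_n 1/\sigma^*_i(z^n_i\mid z^1,\dots,z^{n-1})$ (achieved by the pure strategy that deterministically tracks $z_i$), and this supremum tends to infinity $\prob_{\sigma^*}$-almost surely (e.g.\ it is $2^n$ after $n$ fair coin flips). So the ``blame whoever has sup-likelihood-ratio $\ge\lambda$'' rule blames everyone eventually, and no Ville-type bound $\prob_{\sigma^*}(\sup_n M_n\ge\lambda)\le 1/\lambda$ survives the supremum over an uncountable family of martingales. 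More structurally, the quantity $\sup_{\sigma_i}\ell_i^{\sigma_i}(z)\cdot\prob_{\sigma^*}(z)$ is not a probability measure in $z$ (its total mass blows up), so both your change-of-measure step and any Cauchy--Schwarz-type step lose the crucial normalization that makes the error probabilities small. Separately, your sketched $1/\lambda$ term is only valid for the \emph{innocent} player's likelihood-ratio martingale; under the deviator's own measure the likelihood ratio is typically large with probability close to $1$, so the two terms you propose to balance do not both hold for a single fixed $f$ without resolving the quantifier issue first.

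The paper's proof resolves this by \emph{not} constructing $f$ directly. It fixes the adversary's strategy $(q,\sigma)$ first and shows (Lemma~\ref{lemma:guarantee}) that blaming the player with the maximal likelihood ratio $\ell_j(z)$ \emph{for that particular $\sigma$} wins with probability at least $1-\sqrt{(|I|-1)\ep}$; the key estimate is a Cauchy--Schwarz argument in which $\ell_i(z)\ell_j(z)\prob_{\sigma^*}(z)$ is recognized as the probability of $z$ under the simultaneous double deviation $(\sigma_i,\sigma_j,\sigma^*_{-i,j})$, hence sums to at most $1$ --- a step that is only available because $\ell_i$ and $\ell_j$ are likelihood ratios of \emph{fixed} strategies, not suprema. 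The quantifiers are then reversed by the minimax theorem applied to truncated finite games $\Gamma(D_n)$ (after replacing $D$ by a closed subset), derandomizing the statistician's optimal mixed strategy at the cost of the factor $2$, and passing to the limit by a diagonal argument. This is why the theorem is non-constructive, as the paper itself emphasizes; your proposal is essentially an attempt to make it constructive and founders at precisely the point the minimax theorem is needed.
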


\color{black}
\begin{remark}[Tightness of the bound in Theorem~\ref{theorem:approx:testable}]
The bound in Theorem~\ref{theorem:approx:testable} is tight up to a constant factor, see Subsection~\ref{section:tight}.    
\end{remark}
\color{black}

\begin{remark}[\color{black} Random blame function]
A \emph{random} blame function is a function $\varphi : D^c \to \Delta(I)$, with the interpretation that if the realization $s$ is not in $D$, 
then each player $i$ is blamed with probability $\varphi_i(s)$.
If in the definition of $\delta$-testability we allow for random blame functions,
then we could get rid of the constant 2 in Theorem~\ref{theorem:approx:testable}, as can be seen in the proof of the theorem.
\end{remark}

%\eilon{I changed the phrasing Theorem~\ref{theorem:approx:testable}, because we need strict inequality to be able to approximate the set by a closed set. It is true that we can get rid of the strict inequality. I am not sure whether we want to say it in the statement, or only in a remark.}

As the next result states,
in the case that $\prob_{\sigma^*}(D)=1$,
the goal $(\sigma^\ast,D) $ is in fact 0-testable.

\begin{theorem}
\label{theorem:testable}
Every goal $(\sigma^\ast,D) $ such that 
$\prob_{\sigma^*}(D) = 1$ is $0$-testable.
\end{theorem}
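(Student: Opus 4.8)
The plan is to deduce Theorem~\ref{theorem:testable} from Theorem~\ref{theorem:approx:testable} by a compactness/limiting argument, so the real content is already packaged in the approximate version. Since $\prob_{\sigma^*}(D)=1$, for every $\eps>0$ we certainly have $\prob_{\sigma^*}(D)>1-\eps$, so Theorem~\ref{theorem:approx:testable} produces a blame function $f_\eps : D^c \to I$ that is $2\sqrt{(|I|-1)\eps}$-testable. As $\eps \to 0$ the testability parameter tends to $0$, so intuitively we should be able to extract a ``limit'' blame function $f$ that is $0$-testable. The main obstacle is that $D^c$ is an uncountable space and $I$ is discrete, so there is no obvious topology in which the $f_\eps$ converge; I would instead set up the limit on a carefully chosen countable structure.

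First I would reduce the space of realizations we must certify to something countable. The natural candidate is the collection of finite realizations $A^{<\dN}$, or rather the cylinder sets they generate. The key observation is that $D^c$, being the complement of a Borel set in the product space, together with the fact that everything is driven by $\prob_{\sigma_i,\sigma^*_{-i}}$, means it is enough to control the blame on a countable generating algebra: for a fixed deviator $i$ and deviation $\sigma_i$, the event $\{D^c \text{ and } f\neq i\}$ is determined by countably many cylinders. More robustly, I would take a diagonal subsequence: enumerate a countable dense family of ``test situations'' (pairs of a player and a finitely-supported or suitably discretized deviation, or simply a countable dense set of deviations obtained because each $\Sigma_i$ has a countable dense subset in an appropriate sense) and, using a diagonal argument over $\eps = 1/k$, extract a subsequence of blame functions $f_{1/k}$ that ``stabilize'' on each finite realization — i.e., for each $z \in A^{<\dN}$ with $\prob_{\sigma^*}$-positive-probability cylinder, the value $f_{1/k}$ assigns to cylinders refining $z$ eventually settles. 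Here one uses that $I$ is finite, so along a subsequence every relevant sequence of $I$-values is eventually constant; Tychonoff/König's-lemma-style reasoning on the tree $A^{<\dN}$ gives a single limiting function $f$ defined on a conull subset of $D^c$.

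Then I would verify $0$-testability of the limit $f$. Fix a player $i$ and a deviation $\sigma_i$; I must show $\prob_{\sigma_i,\sigma^*_{-i}}(D^c \text{ and } \{f \neq i\}) = 0$. The event $\{D^c \text{ and } f\neq i\}$ is approximated from inside by events depending on finitely many coordinates, on each of which $f$ agrees with $f_{1/k}$ for all large $k$ (by construction of the limit); on such an event, $\prob_{\sigma_i,\sigma^*_{-i}}$ of $\{f_{1/k}\neq i\} \cap D^c$ is at most $2\sqrt{(|I|-1)/k} \to 0$. Taking the monotone limit over the finite-coordinate approximations and letting $k\to\infty$ gives that the probability is $0$. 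The one subtlety to handle carefully is the interchange of the two limits (refining the cylinder approximation of the event, and $k\to\infty$); I expect this to go through because on each fixed finite-coordinate event the stabilization of $f_{1/k}$ happens for all sufficiently large $k$ simultaneously, so for that event one can first send $k\to\infty$.

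I expect the main obstacle to be making the diagonal extraction rigorous: ensuring that a \emph{single} countable set of finite realizations suffices to pin down $f$ on a set of full measure under \emph{every} $\prob_{\sigma_i,\sigma^*_{-i}}$ simultaneously, and that the resulting $f$ is genuinely Borel. An alternative, possibly cleaner route that sidesteps the limiting argument is a direct construction: build $f$ by a greedy/recursive procedure along the tree $A^{<\dN}$, at each finite realization $z$ with $\prob_{\sigma^*}(z)>0$ committing future blame assignments so as to keep, for every $i$ and every $\sigma_i$, the conditional probability of wrongly-blaming-not-$i$ equal to zero — using that with $\prob_{\sigma^*}(D)=1$ there is no ``error budget'' to spend, so at each node the constraints from different potential deviators are mutually consistent. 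I would present the compactness argument as the primary proof and mention the recursive construction only if it turns out to be shorter.
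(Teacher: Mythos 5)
Your high-level plan --- deduce $0$-testability by applying Theorem~\ref{theorem:approx:testable} with $\eps\to 0$ and combining the resulting blame functions $f_{1/k}$ into a single $f$ --- is the same as the paper's. But the mechanism you propose for the combination (a diagonal/compactness extraction of a pointwise limit) has a genuine gap, and you flag the gap yourself without closing it. Theorem~\ref{theorem:approx:testable} only hands you Borel functions $f_\eps:D^c\to I$ on an uncountable domain; they need not be determined by finite prefixes, so ``the value $f_{1/k}$ assigns to cylinders refining $z$ eventually settles'' does not typecheck --- $f_{1/k}$ assigns values to infinite realizations, not to finite ones, and even if each $f_{1/k}$ were cylinder-measurable its resolution depth would grow with $k$, so there is no fixed countable family on which all of them are simultaneously locally constant. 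Consequently the limit $f$ is not actually constructed, its Borel measurability is not established, and the verification step hinges on an interchange of limits (``$f$ agrees with $f_{1/k}$ for all large $k$ on each finite-coordinate event'') that presupposes exactly the stabilization you could not obtain. Your alternative greedy construction rests on the unsupported assertion that the constraints from different potential deviators are mutually consistent at each node.

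The fix is to abandon convergence of the $f_k$ altogether. Take $\delta_k$ with $\sum_k\delta_k<\infty$ and blame functions $f_k$ with $\prob_{\sigma_i,\sigma^*_{-i}}(f_k\neq i)<\delta_k$ for every $i$ and every deviation $\sigma_i$. For each fixed $(i,\sigma_i)$, Borel--Cantelli gives $\prob_{\sigma_i,\sigma^*_{-i}}(f_k\neq i\text{ for infinitely many }k)=0$. Now define $f(s)$ pointwise to be a player $j$ such that $f_k(s)=j$ for infinitely many $k$ (such a $j$ exists since $I$ is finite, and choosing, say, the least such $j$ keeps $f$ Borel, being built from countably many Borel functions by countable set operations). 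If $f(s)\neq i$ then $f_k(s)\neq i$ infinitely often, so $\{f\neq i\}\subseteq\{f_k\neq i\text{ i.o.}\}$, which has probability $0$ under every $\prob_{\sigma_i,\sigma^*_{-i}}$. This is the paper's argument: it requires no topology on the space of blame functions, no subsequence, and no interchange of limits --- which is precisely where your proposal gets stuck.
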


As we now observe,  Theorem~\ref{theorem:testable} is a consequence of Theorem~\ref{theorem:approx:testable}.

\bigskip

\noindent \begin{proof}[Proof of Theorem~\ref{theorem:testable} using Theorem~\ref{theorem:approx:testable}]
Fix a goal $(D,\sigma^\ast)$ with $\prob_{\sigma^\ast}(D)=1$.
Let $(\delta_k)_{k=1}^\infty$ be a sequence of positive reals such that $\sum_{k=1}^\infty\delta_k<\infty$. 
By Theorem~\ref{theorem:approx:testable}, the goal $(D,\sigma^\ast)$ is $\delta_k$-testable for every $k\in \dN$. 
Let $f_k:D^c\to I$ be a blame function such that 
$\prob_{\sigma_i,\sigma^*_{-i}}(f_k\neq i) < \delta_k$,
for every player $i \in I$ and every strategy $\sigma_i \in \Sigma^B_i$.
By the Borel-Cantelli Lemma,
$\prob_{\sigma_i,\sigma^*_{-i}}(f_k\neq i\text{ for infinitely many }k) =0$.

Let $f:D^c\to I$ be such that $f(s)=i$ only if $f_k(s)=i$ for infinitely many $k$'s. 
Then $f$ satisfies
that $\prob_{\sigma_i,\sigma^*_{-i}}(f\neq i) =0$,
for every player $i \in I$ and every strategy $\sigma_i \in \Sigma^B_i$, as desired.
\end{proof}

\color{black}
\begin{remark}[Finite horizon]
\label{remark:finite}
Though our main results, Theorems~\ref{theorem:approx:testable} and~\ref{theorem:testable},
are phrased for the problem with infinite horizon,
both apply in particular to the case of finite horizon.
That is, suppose there is $n \in \dN$ such that $D\subseteq A^\dN$ depends only on the first $n$ action profiles:
two realizations $s$ and $s'$ coincide in their first $n$ coordinates, then either both are in $D$ or both are outside $D$.

According to, e.g., Theorem~\ref{theorem:approx:testable},
if $\prob_{\sigma^*}(D) > 1-\ep$,
and if the choices of the players in the first $n$ stages $(z^1,z^2,\ldots,z^n)$ satisfy that all extensions of this sequence are not in $D$,
then after stage $n$, with high probability, one can correctly determine which player deviated from $\sigma^*$.
\end{remark}

\begin{remark}[Construction of blame functions for specific goals]
Theorems~\ref{theorem:approx:testable} and~\ref{theorem:testable} are theoretical results that assert the existence of a blame function that identifies the deviator.
A natural question is whether there is an efficient procedure that constructs such a blame function for every given problem.
Unfortunately, we do not have such a procedure. 
In Section~\ref{section:example} we analyze two specific goals
and illustrate the complexity of the required blame functions.
Since our proof of Theorem~\ref{theorem:approx:testable} uses the minmax theorem,
when considering a set up with finite horizon, 
as in Remark~\ref{remark:finite},
a blame function that correctly identifies the deviator can be calculated using a linear program,
yet the size of the linear program is double exponential in the length of the problem.
\end{remark}
\color{black}

\section{Examples}
\label{section:example}

In this section we present \color{black}
three
examples;
the first shows the tightness of Theorem~\ref{theorem:approx:testable},
and in the other two 
\color{black}
we describe 
explicit blame functions. 
The second example is rather simple, 
\color{black}
while
the third
\color{black}
is more sophisticated
\color{black}
and is analyzed both in an infinite-horizon and in a finite-horizon framework.
In all 
\color{black}
examples,
there are two players, denoted A and B.
\color{black}
To simplify notations,
in the second and third examples
\color{black}
we will assume that Player~A is active \emph{only} in odd periods, 
and Player~B is active \emph{only} in even periods.

\color{black}
\subsection{Tightness of Theorem~\ref{theorem:approx:testable}}
\label{section:tight}

We here consider a problem with finite horizon (of length 1).
Each of the two players chooses a single random bit, where he
is supposed to choose $1$ with probability $\mu$ and $0$ with
probability $1-\mu$.  
The target set is $D = \{ 00, 01, 10\}$.
Here the probability of $D^c = \{11\}$ is $\mu^2$ and each player can
deviate by choosing $1$, 
ensuring that if the other player is honest, then
the resulting pair of choices lies in $D^c$ with probability $\mu$. 
In this scenario, any blame function must err with probability at least
$\mu$ under some deviation.
Indeed, suppose that the blame function $f$ satisfies $f(11) = A$.
Then when Player~B deviates and selects 1 with probability 1,
with probability $\mu$ Player~A is erroneously blamed.
\color{black}

\subsection{Adjacent Ones}
\label{sub11}
Two players generate an infinite sequence in $\{0,1\}^\dN$, where
Player A chooses the odd bits and Player~B chooses the even ones.
In period $n$, the player whose turn it is to choose the bit
is supposed to
choose $1$ with probability $\mu/n$, and $0$ with probability
$1-\mu/n$, where $\mu>0$ is a small real.
Formally, the strategy pair $\sigma^* = (\sigma^*_A,\sigma^*_B)$ is given by
$\sigma^*_A(1 \mid z) = \mu/n$ for every $z \in \{0,1\}^{n-1}$ such that $n$ is odd,
and
$\sigma^*_B(1 \mid z) = \mu/n$ for every $z \in \{0,1\}^{n-1}$ such that $n$ is even.

Call a sequence in $\{0,1\}^\dN$ \emph{bad} if there is an odd number $n\geq 1$
such that the bits in both periods $n$ and $n+1$ are $1$. That is, Player B
chooses $1$ exactly one period after Player A chooses $1$. A sequence is
\emph{good} if it is not bad, and the
target set $D$ is the set of all good infinite sequences.
The probability of falling into the target set under the process above is
$1-\eps$,
where
$\eps$ is given by
$$
\eps=\sum_{k=0}^\infty \left( \prod_{0 \leq i<k} (1-\frac{\mu^2}{(2i+1)(2i+2)})\right)
\cdot
\frac{\mu^2}{(2k+1)(2k+2)} < \mu^2.
$$

By Theorem \ref{theorem:approx:testable}, the goal
$(\sigma^*,D)$
described above
is $O(\mu)$-testable.
Intuitively, if Player~A selected $1$ too often, she is proclaimed the deviator,
and otherwise it is Player~B.
Formally, let $s=(s_1,s_2,s_3, \ldots )$
be a sequence in $D^c$, that is, a sequence containing
two consecutive ones in positions $2k+1, 2k+2$ for some $k \geq 0$.
If
$$\sum_{k \geq 0, s_{2k+1}=1} \frac{\mu}{2k+2} > \mu,$$
then $f(s)=A$. Otherwise, $f(s)=B$.

%\color{black}
%Let $\sigma_2$ be any strategy of Player~2.
%Then
%\[ \E_{\sigma^*_1,\sigma_2}\left[\sum_{k \geq 0, s_{2k+1}=1} \frac{\mu}{2k+2}\right] = \sum_{k \geq 0} \frac{\mu}{2k+1}\cdot \frac{\mu}{2k+2}
%< \mu^2. \]
%By Markov's Inequality,
%\begin{equation}
%    \label{equ:101}
%\prob_{\sigma^*_1,\sigma_2}(D^c \cap \{f = 1\}) \leq \prob_{\sigma^*_1,\sigma_2}\left(\sum_{k \geq 0, s_{2k+1}=1} \frac{\mu}{2k+2} > \mu\right) < \mu.
%\end{equation}
%
%Let now $\sigma_1$ be any strategy of Player~1.
%Note that
%\color{black}
%$s \in D^c$ and $f(s)=2$ only if
%$\sum_{k \geq 0, s_{2k+1}=1} \frac{\mu}{2k+2} \leq \mu$
%and there is an index $k \geq 0$ such that
%$s_{2k+1}=s_{2k+2}=1$.
%The conditional probability of this event
%given any fixed possibility of the odd bits of $s$  satisfying the above
%inequality
%is at most $\sum_{k \geq 0, s_{2k+1}=1} \frac{\mu}{2k+2} \leq \mu$.
%Therefore,
%\color{black}
%\begin{equation}
%    \label{equ:102}
%\prob_{\sigma_1,\sigma^*_2}(D^c \cap \{f = 2\}) \leq \mu.
%\end{equation}
%
%Eqs.~\eqref{equ:101} and~\eqref{equ:102} implies that $(\sigma^*,D)$
%is $\mu$-testable, and $f$ is the corresponding blame function.
%\color{black}

Suppose Player A is honest. If so, then the expectation of the
sum
$$\sum_{k \geq 0, s_{2k+1}=1} \frac{\mu}{2k+2}$$
is
$$
\sum_{k \geq 0} \frac{\mu}{2k+1}\cdot \frac{\mu}{2k+2}
< \mu^2.
$$
Therefore, by Markov's Inequality, the probability this sum exceeds
$\mu$ is smaller than $\mu$, showing that in this case
the probability
that the blame function blames Player~A is less than $\mu$.
This conclusion holds for every strategy of Player B,
as this step is independent of the bits selected by B.

Suppose next that Player B is honest. 
We show that no strategy of
Player A causes Player~B to be blamed with
probability exceeding $\mu$ while keeping
the condition 
$$
\sum_{k \geq 0, s_{2k+1}=1} \frac{\mu}{2k+2} \leq \mu.
$$ 
To do so, for every $k \in \dN$ define a random variable $\delta_k$ by
$\delta_k=\frac{\mu}{2k+2}$ when $s_{2k+1}=1$, and $\delta_k=0$ when
$s_{2k+1}=0$.
Let $y$ be a uniform
random number in $[0,1]$ that is selected at the outset,
and suppose Player~B is blamed in step $k$ if and only if 
$y$ lies in $(\sum_{i<k} \delta_i,
\sum_{i=1}^k \delta_i]$.
Since Player~B is honest, for every
given history in which Player~B has not been already blamed, the
conditional probability Player~B is blamed in step~$k$ is
$\frac{\delta_k}{1-\sum_{i <k} \delta_i} \geq \delta_k$.
As a result, the probability that Player~B is blamed under this construction,
$\sum_{k=1}^\infty \delta_k$,

is at least the probability she is blamed in the original problem.
%$\sum_{k=1}^\infty \delta_k$. 
This sum is at most 
$\mu$, implying the desired result.

This shows that the
goal considered here
is $\mu$-testable by the explicit blame function $f$ described
above.

\subsection{Avoiding the origin in a random walk}
\label{subrw}

In this subsection we return to the random walk example in the introduction. Two players generate an infinite sequence in $\{-1,1\}^\dN$, thought of as the moves in an infinite walk on $\mathbb{Z}$ that originates at \color{black} $1$, \color{black} where
Player A chooses the odd terms and Player B chooses the even terms. In each period, the player is supposed to choose $1$ with probability $1/2$. 
\color{black}
We start by studying the problem with infinite horizon,
and then discuss the problem with finite horizon.
\color{black}

Suppose that the target set $D$ is the set of all infinite random walks $w \in \{-1,1\}^\dN$ such that there exists $n$ for which $\sum_{i\le n} w_i = \color{black} -1$, 
i.e., 
%the walk reaches the origin at least once. 
 the walk reaches $-1$ at least once.
The random walk is recurrent with probability $1$, so $\prob_{\sigma^*}(D)=1$.

Let $a_1,a_2,\ldots\in \{-1,1\}$ and $b_1,b_2,\ldots,\in\{-1,1\}$ be the moves that player A plays and player B plays, in order,
that is, $w = (a_1,b_1,a_2,b_2,\dots)$. 
Let $A_n=\displaystyle\sum_{i=1}^n a_i$ and $B_n=\displaystyle\sum_{i=1}^n b_i$. Let $s_n$ be the position of the random walk at time $n$, so that $s_0=1$, and for $n\geq 1$, $s_{2n}=1+A_n+B_n$ and $s_{2n-1}=1+A_n+B_{n-1}$. Call the player that plays randomly Honest, and the other player Deviator. We claim that the following blame function $f:D^c \rightarrow \{A, B\}$ allows the statistician to detect Deviator with probability $1$. Given $w\in D^c$, the following steps are performed in order to determine the value of $f$.

\begin{enumerate}
\item If $\limsup_{n \to \infty}\left(\frac{A_n}{\sqrt{n}\log \log n}\right)> 0$, choose player A. 

Otherwise, if $\limsup_{n \to \infty}\left(\frac{B_n}{\sqrt{n}\log \log n}\right)> 0$, choose player B.
\item If $\displaystyle\sum_{n=2}^{\infty}\frac{a_n}{\sqrt{n}(\log n)^{3/4}}$ diverges, choose player A. Otherwise, if $\displaystyle\sum_{n=2}^{\infty}\frac{b_n}{\sqrt{n}(\log n)^{3/4}}$ diverges, choose player B.
\item If $\displaystyle\sum_{n=1}^{\infty}\frac{s_{2n+1}^2-s_{2n}^2}{2n\log(2n)}=\infty$, choose player B. Otherwise, if $\displaystyle\sum_{n=2}^{\infty}\frac{s_{2n}^2-s_{2n-1}^2}{(2n-1)\log (2n-1)}=\infty$, choose player A.
\item Otherwise, choose player A.
\end{enumerate}

\begin{theorem}\label{thm:algorithm-works}
The blame function $f$ above correctly identifies the Deviator with probability $1$, regardless of Deviator's strategy.
\end{theorem}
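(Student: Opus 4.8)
The plan is to fix the Honest player, say $B$, let $A$ be the Deviator with an arbitrary strategy, and show $\prob_{\sigma_A,\sigma^*_B}(w\in D^c\text{ and }f(w)=B)=0$; the case where $B$ deviates is handled the same way and is in fact easier. Write $\calF_n$ for the $\sigma$-algebra generated by the first $n$ moves. Since $B$ plays fair coins, $(b_n)_{n\ge1}$ is i.i.d.\ uniform on $\{-1,1\}$ with $b_n$ independent of $\calF_{2n-1}$, so $B_n$ is an honest simple random walk. The event $\{f=B\}$ splits according to which step assigns the blame. Step~1 blames $B$ only if $\limsup_n B_n/(\sqrt n\log\log n)>0$, which has probability $0$ by the law of the iterated logarithm for $B_n$. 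Step~2 blames $B$ only if $\sum_n b_n/(\sqrt n(\log n)^{3/4})$ diverges; but the partial sums form an $L^2$‑bounded martingale (since $\sum_n n^{-1}(\log n)^{-3/2}<\infty$), hence converge a.s., so this too has probability $0$. Step~4 never blames $B$. Everything therefore reduces to showing that, almost surely, $w\in D^c$ together with Steps~1--2 not firing implies that the first series in Step~3,
\[
\Sigma_{\mathrm A}:=\sum_{n\ge1}\frac{s_{2n+1}^2-s_{2n}^2}{2n\log(2n)},
\]
does \emph{not} diverge to $+\infty$.

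Next I would extract the structure of the event $\calE:=\{w\in D^c\}\cap\{\text{Steps 1--2 do not fire}\}$. On $\calE$ the walk never reaches $0$, so $s_n\ge1$ for all $n$; also $\limsup_n A_n/(\sqrt n\log\log n)\le0$ from Step~1, and the LIL gives $|B_n|=O(\sqrt{n\log\log n})$. Combining $s_{2n}=10+A_n+B_n$ with $s_{2n}\ge1$ pins $A_n$ from below as well, and one gets $s_n=o(\sqrt n\,\log\log n)$, hence $s_n^2=o(n(\log\log n)^2)$. Now write $s_{2n+1}^2-s_{2n}^2=2s_{2n}a_{n+1}+1$, so $\Sigma_{\mathrm A}=\sum_n \frac1{2n\log 2n}+\sum_n \frac{s_{2n}a_{n+1}}{n\log 2n}$, where the first sum equals $\tfrac12\log\log N+O(1)$ and in particular tends to $+\infty$. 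Decomposing $a_{n+1}=\alpha_n+\xi_{n+1}$ with $\alpha_n=\E[a_{n+1}\mid\calF_{2n}]$ and $\xi_{n+1}$ a bounded martingale difference, the transform $\sum_n s_{2n}\xi_{n+1}/(n\log 2n)$ has summable conditional variances on $\calE$ (because $s_{2n}^2=o(n(\log\log n)^2)$ and $\sum_n (\log\log n)^2/(n(\log n)^2)<\infty$) and hence converges a.s.\ there; the same peels off any other martingale transforms coming from $B$'s fair increments. So on $\calE$ it remains to show $\sum_{n\le N}s_{2n}\alpha_n/(n\log 2n)\le-(1-o(1))\log\log N$, which then forces $\Sigma_{\mathrm A}\to-\infty$.

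The heart of the matter is the claim that, on $\calE$, staying in $D^c$ against the \emph{recurrent} honest walk $B$ forces $A$'s conditional drift to be strongly anti‑correlated with the current position, quantitatively $P_n:=\sum_{j\le n}s_{2j}\alpha_j\le-(1-o(1))\,n$. Here I would use the martingale $Q_n:=s_{2n}^2-2n-2\sum_{j<n}s_{2j}\alpha_j$: the identity $\E[s_{2n+2}^2-s_{2n}^2\mid\calF_{2n}]=2s_{2n}\alpha_n+2$ (valid because $B$ is honest and, given $\calF_{2n}$, the increments $a_{n+1},b_{n+1}$ are conditionally uncorrelated) makes $Q_n$ a martingale with quadratic variation $\langle Q\rangle_N=\sum_{n<N}O(s_{2n}^2)=o(N^2(\log\log N)^2)$. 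Solving for $P_{n-1}$ and inserting the bounds on $s_{2n}^2$ and on the growth of $Q_n$ would give the desired control of $P_n$; feeding $P_n\le-(1-o(1))n$ into a summation by parts against the weights $1/(n\log 2n)$, together with $\sum_{n\le N}1/(n\log n)=\log\log N+O(1)$, yields the needed estimate, and the symmetric computation shows that in the same situation Step~3's \emph{second} series diverges to $+\infty$, so $A$ is correctly blamed; Step~4 is reached only on a null event. The main obstacle is precisely this quantitative control of $Q_n$: the naive bound $s_{2n}^2=o(n(\log\log n)^2)$ only yields $Q_n=o\!\big(n(\log\log n)^{3/2}\big)$, which dominates the main term $-n$, so one cannot conclude $P_n\le-(1-o(1))n$ from it alone. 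To close the gap one must exploit the $D^c$ constraint more tightly (and, I expect, the precise normalization $\sqrt n(\log n)^{3/4}$ in Step~2, whose purpose is to render certain error series summable), and carry out this bookkeeping uniformly over all Deviator strategies — that is the technical core of the proof.
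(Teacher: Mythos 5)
Your reduction is sound up to the last step: Steps 1, 2 and 4 are handled exactly as in the paper (law of the iterated logarithm for the Honest partial sums, Kolmogorov/$L^2$-bounded martingale for the weighted increments, and the observation that Step 4 fires only on a null event because the Honest player's Step-3 series diverges to $+\infty$ a.s., which is the paper's Lemma~\ref{lemma:random-increases-square}). The gap is in the one place you flag yourself: you try to show that $\Sigma_{\mathrm A}$ fails to diverge to $+\infty$ by controlling the Deviator's conditional drift $\alpha_n=\E[a_{n+1}\mid\calF_{2n}]$ and proving $\sum_{j\le n}s_{2j}\alpha_j\le-(1-o(1))n$ via the martingale $Q_n$. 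As you note, the available bound $s_{2n}^2=o(n(\log\log n)^2)$ gives fluctuations of $Q_n$ that swamp the main term $-n$, so this route does not close, and the paper never takes it: it makes no attempt to control the Deviator's drift at all.

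The missing idea is the paper's Lemma~\ref{lemma:square-doesn't-increase}. Because Step 2 did not fire, $\sum_n (a_n+b_n)/(\sqrt n(\log n)^{3/4})$ converges; writing $a_n+b_n=s_{2n}-s_{2n-2}$ and summing by parts turns this into the convergence of $\sum_n c_n s_{2n}$ with $c_n=\Theta\bigl(n^{-3/2}(\log n)^{-3/4}\bigr)$. Since on $D^c$ every $s_n$ is \emph{nonnegative}, a convergent series with nonnegative terms and weights of a fixed order forces $\sum_n s_n/(n^{3/2}(\log n)^{3/4})<\infty$, and multiplying each term by $s_n/(\sqrt n(\log n)^{1/4})=o(1)$ gives $\sum_n s_n^2/(n^2\log n)<\infty$. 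A second summation by parts then shows the \emph{full} Step-3 series $\sum_n (s_{n+1}^2-s_n^2)/(n\log n)$ converges; since the Honest half of it diverges to $+\infty$ a.s.\ (your Lemma-3.10-type martingale argument, which you already have), the Deviator's half is forced to $-\infty$, so the Honest player is never blamed at Step 3. This is exactly the tighter exploitation of the $D^c$ constraint and of the $\sqrt n(\log n)^{3/4}$ normalization that you anticipated but did not supply; without it the argument is incomplete.
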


\begin{remark}[\color{black} Visiting the origin infinitely often]
In the second example in the introduction, 
the target set $D$ contains all realizations that visit the origin infinitely often.
The algorithm above can be adapted to this case.
Indeed, supposing that the realization $s$ is such that the origin is visited finitely many times, then applying the above algorithm to the suffix of the realization after the last visit to the origin identifies Deviator with probability 1.
\end{remark}

The idea of the proof is that since Deviator must move to the right during periods where Honest moves substantially to the left (to avoid going below zero), Deviator must thus move to the left when Honest moves to the right to avoid being clearly right-biased (Steps 1 and 2 detect right-biased behavior). Thus Deviator must keep the walk fairly close to $0$.
% Eilon removed this space to join the two paragraphs into one.
Since $s_n^2$ increases by $1$ in expectation on Honest's moves, it should decrease on Deviator's moves to keep the walk close to $0$, and this discrepancy is what's detected in Step 3.

The success of the algorithm can be deduced from the following lemmas.

\begin{lemma}\label{lemma:random-walk-divergence}
Honest is chosen on Step 1 with probability $0$.
\end{lemma}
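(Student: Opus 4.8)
The plan is to reduce the statement to the upper half of the law of the iterated logarithm (LIL), applied to Honest's own partial sums, using the fact that the normalization $\sqrt n\log\log n$ appearing in Step~1 grows strictly faster than the LIL scale $\sqrt{n\log\log n}$.

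First I would fix an arbitrary strategy $\sigma$ for Deviator and work under $\prob_{\sigma,\sigma^*}$, where Honest uses the prescribed fair-coin strategy. Suppose Player~A is Honest (the case of Player~B is symmetric). By definition of $\sigma^*_A$, Player~A's move $a_n$ (made in period $2n-1$) is an independent uniform draw from $\{-1,1\}$ given the entire history $(a_1,b_1,\dots,a_{n-1},b_{n-1})$; in particular $a_n$ is independent of $(a_1,\dots,a_{n-1})$, so by induction $(a_n)_{n\ge 1}$ is i.i.d.\ with mean $0$ and variance $1$ and $A_n=\sum_{i\le n}a_i$ is a simple symmetric random walk — this holds no matter how adaptively Deviator reacts to Honest's past moves.

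Next I would invoke the LIL: almost surely $\limsup_n |A_n|/\sqrt{2n\log\log n}=1$, hence eventually $|A_n|\le 2\sqrt{2n\log\log n}$. Writing
\[
\frac{A_n}{\sqrt n\,\log\log n}=\frac{A_n}{\sqrt{2n\log\log n}}\cdot\frac{\sqrt2}{\sqrt{\log\log n}},
\]
the first factor is eventually bounded (by $2$) and the second tends to $0$, so $A_n/(\sqrt n\log\log n)\to 0$ almost surely. Consequently $\limsup_n A_n/(\sqrt n\log\log n)>0$ has probability $0$, so Player~A is chosen on Step~1 with probability $0$. Running the same argument with $(b_n)$ and $B_n$ covers the case that Player~B is Honest, and since the conclusion holds for every Deviator strategy $\sigma$, the lemma follows.

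I do not expect a genuine obstacle here: the two points needing care are (i) confirming that Honest's moves form an i.i.d.\ sequence despite Deviator's adaptive behavior — immediate from the filtration structure of $\prob_\sigma$ — and (ii) keeping track that the Step~1 scale $\sqrt n\log\log n$ dominates the LIL scale $\sqrt{n\log\log n}$, which is precisely what drives the relevant $\limsup$ down to $0$.
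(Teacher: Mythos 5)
Your proof is correct and takes essentially the same route as the paper, which simply notes that Honest's partial sums form a truly random walk and invokes the Law of the Iterated Logarithm. The extra details you supply — that Honest's moves remain i.i.d.\ under any adaptive Deviator strategy, and that the Step~1 normalization $\sqrt n\log\log n$ dominates the LIL scale $\sqrt{2n\log\log n}$ — are exactly the points the paper leaves implicit.
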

\begin{lemma}\label{lemma:variance-convergence}
Honest is chosen on Step 2 with probability $0$.
\end{lemma}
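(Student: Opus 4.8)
The plan is to reduce the lemma to a single statement about sums of independent random variables. Let $(x_n)_{n\ge 1}$ denote the sequence of moves made by the Honest player in her own periods. Since the Honest player tosses a fair coin independently in each of her periods, $(x_n)_{n\ge 1}$ is i.i.d.\ uniform on $\{-1,1\}$, and this law is unaffected by the Deviator's strategy (the Deviator may react to $(x_n)$, but cannot change its distribution). So it suffices to prove that the random series $\sum_{n\ge 2}\frac{x_n}{\sqrt{n}(\log n)^{3/4}}$ converges almost surely.

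Granting this, the lemma follows by a short case analysis. If the Honest player is $A$, then $(a_n)=(x_n)$, and the algorithm outputs $A$ in Step~2 only on the event that $\sum_{n\ge 2}\frac{a_n}{\sqrt{n}(\log n)^{3/4}}$ diverges, an event of probability $0$; hence Honest is chosen on Step~2 with probability $0$. Symmetrically, if the Honest player is $B$, then $(b_n)=(x_n)$, and the algorithm outputs $B$ in Step~2 only on the event that $\sum_{n\ge 2}\frac{b_n}{\sqrt{n}(\log n)^{3/4}}$ diverges, again of probability $0$, so once more Honest is chosen on Step~2 with probability $0$. In both cases the bound holds for every strategy of the Deviator, as the relevant event depends only on the Honest player's own moves.

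It remains to establish the almost-sure convergence of $\sum_{n\ge 2}\frac{x_n}{\sqrt{n}(\log n)^{3/4}}$. The summands $\frac{x_n}{\sqrt{n}(\log n)^{3/4}}$ are independent, each has mean $0$, and the sum of their variances is $\sum_{n\ge 2}\frac{1}{n(\log n)^{3/2}}$, which is finite because the exponent $3/2$ exceeds $1$ (compare with $\int_2^\infty \frac{dt}{t(\log t)^{3/2}}<\infty$). Hence, by Kolmogorov's theorem on the convergence of series of independent random variables — equivalently, by the $L^2$ martingale convergence theorem applied to the partial sums $S_N=\sum_{n=2}^N \frac{x_n}{\sqrt{n}(\log n)^{3/4}}$, which form an $L^2$-bounded martingale — the series converges almost surely.

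In contrast with the delicate balancing involved in designing the blame function, there is essentially no obstacle in this particular lemma: the only point requiring attention is the exponent bookkeeping, namely that the factor $(\log n)^{3/4}$ in the denominator of Step~2 makes the variance series $\sum_{n\ge 2}\frac{1}{n(\log n)^{3/2}}$ summable. (The inequality $3/4>1/2$ is exactly what guarantees that Honest passes Step~2 almost surely; an exponent below $1/2$ would make the series diverge for Honest as well, while the specific value $3/4$ is chosen to leave room for the companion arguments that force the Deviator to be caught at a later step.)
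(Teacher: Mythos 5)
Your proof is correct and follows essentially the same route as the paper: identify the Honest player's moves as i.i.d.\ fair signs, compute that the variance series $\sum_n \frac{1}{n(\log n)^{3/2}}$ converges, and invoke Kolmogorov's convergence theorem for series of independent random variables (the paper cites the two-series theorem) to conclude that the relevant series converges almost surely, so Honest is blamed on Step~2 with probability $0$. The extra remarks on why the Deviator's strategy cannot affect the Honest player's marginal law are a welcome elaboration but do not change the argument.
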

\begin{lemma}\label{lemma:square-doesn't-increase}
If no player is chosen on Steps 1 and 2, and $s_n$ is nonnegative for all $n$, then $\displaystyle\sum_{n=2}^{\infty}\frac{s_n^2}{n^2\log n}$ converges.
\end{lemma}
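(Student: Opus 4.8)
The plan is to argue deterministically: fix a realisation $w\in D^c$ on which neither Step~1 nor Step~2 selects a player (no probability and no knowledge of which player is honest will be needed), and keep in mind $s_{2n}=10+A_n+B_n$, $s_{2n-1}=10+A_n+B_{n-1}$, and $s_m\ge1$ for all $m$ since $w\in D^c$.

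First I would extract a crude pointwise bound. Because Step~1 did not fire, $\limsup_n A_n/(\sqrt n\log\log n)\le0$ and $\limsup_n B_n/(\sqrt n\log\log n)\le0$. Feeding the upper bound on $B_n$ into the inequalities $A_n\ge-9-B_n$ and $A_n\ge-9-B_{n-1}$ (which follow from $s_{2n}\ge1$ and $s_{2n-1}\ge1$) forces $\liminf_n A_n/(\sqrt n\log\log n)\ge0$, so $A_n=o(\sqrt n\log\log n)$; symmetrically $B_n=o(\sqrt n\log\log n)$, whence $s_n=o(\sqrt n\log\log n)$.

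The heart of the matter is a summability bound on $s_n$ itself, drawn from Step~2. Since Step~2 did not fire, $\sum_n a_n/(\sqrt n(\log n)^{3/4})$ and $\sum_n b_n/(\sqrt n(\log n)^{3/4})$ converge, hence so does $\sum_n(a_n+b_n)/(\sqrt n(\log n)^{3/4})$. As $a_n+b_n=s_{2n}-s_{2n-2}$, I would apply summation by parts with the (eventually decreasing) weights $\rho_n=n^{-1/2}(\log n)^{-3/4}$: the partial sums on the left converge, the boundary term $\rho_N s_{2N}=o\big(\log\log N/(\log N)^{3/4}\big)$ vanishes thanks to the pointwise bound, and since $\rho_n-\rho_{n+1}\asymp n^{-3/2}(\log n)^{-3/4}$ while $s_{2n}\ge1>0$, what remains is the convergent series of positive terms $\sum_n s_{2n}/(n^{3/2}(\log n)^{3/4})<\infty$. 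The identical device applied to $s_{2n-1}-s_{2n-3}=a_n+b_{n-1}$ — noting that $\sum_n b_{n-1}/(\sqrt n(\log n)^{3/4})$ converges, since it differs from $\sum_n b_n/(\sqrt n(\log n)^{3/4})$ by an absolutely convergent series — gives $\sum_n s_{2n-1}/(n^{3/2}(\log n)^{3/4})<\infty$.

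To conclude I would factor
$$\frac{s_n^2}{n^2\log n}\;\asymp\;\frac{s_n}{n^{3/2}(\log n)^{3/4}}\cdot\frac{s_n}{n^{1/2}(\log n)^{1/4}},$$
and observe that, by the pointwise bound, the second factor is $o\big(\log\log n/(\log n)^{1/4}\big)\to0$, hence bounded; therefore $s_n^2/(n^2\log n)\le C\,s_n/(n^{3/2}(\log n)^{3/4})$ for large $n$, and summing over even and odd $n$ the previous paragraph yields $\sum_n s_n^2/(n^2\log n)<\infty$. The delicate points are all of a bookkeeping nature — checking $\rho_n$ is eventually monotone, bounding $\rho_n-\rho_{n+1}$ from below, and the index shift in the odd case — with one substantive point that I would single out: the boundary term in the summation by parts really must vanish, and this is exactly where one needs the pointwise bound $s_n=o(\sqrt n\log\log n)$ from Step~1, rather than the weaker $o(\sqrt n(\log n)^{3/4})$ that Kronecker's lemma would extract from Step~2's series alone — and it is the same strong bound that forces the second factor above to $0$. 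Beyond this I do not anticipate any real obstacle.
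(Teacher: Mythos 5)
Your proof is correct and takes essentially the same route as the paper's: the pointwise bound $s_n=o(\sqrt{n}\log\log n)$ from Step 1 combined with positivity of the walk, summation by parts of the Step 2 series $\sum(a_n+b_n)/(\sqrt{n}(\log n)^{3/4})$ against the weights $n^{-1/2}(\log n)^{-3/4}$ to extract $\sum s_n/(n^{3/2}(\log n)^{3/4})<\infty$ (using $s_n>0$), and finally multiplication by the bounded factor $s_n/(\sqrt{n}(\log n)^{1/4})=o(1)$. The only immaterial differences are that the paper passes from even to odd indices via $|s_{2n}-s_{2n-1}|\le 1$ rather than a second summation by parts, and that you are more explicit about why Step 1's one-sided $\limsup$ condition, together with $s_n\ge 1$, yields the two-sided bound.
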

\begin{lemma}\label{lemma:random-increases-square}
If A is Honest, then the probability that no player is chosen on Step 1 and $\displaystyle\sum_{n=1}^{\infty}\frac{s_{2n+1}^2-s_{2n}^2}{2n\log(2n)}$ does not diverge to $\infty$ is $0$, regardless of Deviator's strategy.

Similarly, if B is Honest, the probability that no player is chosen on Step 1 and $\displaystyle\sum_{n=2}^{\infty}\frac{s_{2n}^2-s_{2n-1}^2}{(2n-1)\log(2n-1)}$ does not diverge to $\infty$ is $0$, regardless of Deviator's strategy.
\end{lemma}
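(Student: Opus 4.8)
I will prove the first statement (A Honest); the second is obtained by interchanging the players and replacing $a_{n+1}$ by $b_n$ throughout. The whole discussion takes place on $D^c$, where the walk never reaches $0$ and hence $s_m\ge 1$ for every $m$. Write $E$ for the event that no player is chosen on Step~1, i.e.\ $\limsup_n A_n/(\sqrt n\log\log n)\le 0$ and $\limsup_n B_n/(\sqrt n\log\log n)\le 0$. I must show $\prob_{\sigma^*_A,\sigma_B}\big(E\cap\{\text{the series does not diverge to }+\infty\}\big)=0$ for every (possibly adaptive) Deviator strategy $\sigma_B$.

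The plan is to split the series into a divergent deterministic part and a convergent martingale part. Since $s_{2n+1}=s_{2n}+a_{n+1}$ and $a_{n+1}^2=1$, one has $s_{2n+1}^2-s_{2n}^2=2s_{2n}a_{n+1}+1$, so
\[
\sum_{n=1}^{N}\frac{s_{2n+1}^2-s_{2n}^2}{2n\log(2n)}=\sum_{n=1}^{N}\frac{s_{2n}a_{n+1}}{n\log(2n)}+\sum_{n=1}^{N}\frac{1}{2n\log(2n)}.
\]
The last sum diverges to $+\infty$, being comparable to $\sum 1/(n\log n)$. It therefore suffices to show that the martingale part $M_N:=\sum_{n=1}^N s_{2n}a_{n+1}/(n\log(2n))$ converges almost surely on $E$, since then the left-hand side tends to $+\infty$ on $E$.

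This is where A's honesty enters. Let $\mathcal{F}_m$ be the $\sigma$-algebra generated by the first $m$ coordinates of the realization. The increment $X_n:=s_{2n}a_{n+1}/(n\log(2n))$ is $\mathcal{F}_{2n+1}$-measurable, and because A plays a fresh fair coin at time $2n+1$ we have $\E[a_{n+1}\mid\mathcal{F}_{2n}]=0$ no matter what Deviator does; conditioning first on $\mathcal{F}_{2n}$ gives $\E[X_n\mid\mathcal{F}_{2n-1}]=0$, so $(M_N)$ is a martingale with respect to $(\mathcal{F}_{2n+1})_n$. Using $a_{n+1}^2=1$ and $|b_n|=1$, its conditional variances obey $\E[X_n^2\mid\mathcal{F}_{2n-1}]\le (s_{2n-1}+1)^2/(n^2\log^2(2n))$. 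To control these I would bound the position on $E$: fix $\eps>0$; the limsup conditions give $A_n,B_n\le \eps\sqrt n\log\log n$ for all large $n$, while on $D^c$ we have $s_m\ge 1$, hence $1\le s_m\le 10+2\eps\sqrt m\log\log m$ and $s_m^2=O\big(m(\log\log m)^2\big)$ eventually. Thus the conditional variances are $O\big((\log\log n)^2/(n\log^2 n)\big)$, whose sum converges. By the standard theorem that a martingale converges almost surely on the event where its predictable quadratic variation is finite, $M_N$ converges almost surely on $E$, completing the argument; the symmetric case uses $s_{2n}^2-s_{2n-1}^2=2s_{2n-1}b_n+1$.

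The main obstacle is the martingale step, in two respects. First, one must recognize that Honest's move is a fresh coin independent of the entire past even though Deviator's strategy is adaptive and enters $s_{2n}$ through the $b_j$'s; this is precisely what makes $(X_n)$ a martingale difference sequence. Second, one must apply the \emph{localized} convergence theorem on the random event $E$ rather than a global $L^2$ bound, because off $E$ the series can genuinely fail to converge; the two-sided estimate $1\le s_m=O(\sqrt m\log\log m)$, which combines the Step~1 upper bounds with the positivity forced by $D^c$, is exactly what guarantees summable conditional variances on $E$.
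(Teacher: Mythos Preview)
Your proof is correct and follows the paper's strategy exactly: write $s_{2n+1}^2-s_{2n}^2=1+2a_{n+1}s_{2n}$, note that $\sum 1/(2n\log(2n))=\infty$, and show that the martingale part $\sum a_{n+1}s_{2n}/(n\log(2n))$ converges almost surely on the Step~1 event $E$. The only cosmetic difference is in the localization step: the paper replaces $s_n$ by the explicit truncation $s'_n=\min(s_n,\sqrt n\log\log n)$ to obtain a globally $L^2$-bounded martingale and then observes that $s'_n=s_n$ eventually on $E$, whereas you invoke directly the standard theorem that a martingale converges almost surely on the set where its predictable quadratic variation is finite.
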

Lemma~\ref{lemma:square-doesn't-increase} is the 
\color{black}
only
\color{black}
place where the fact that the walk must be nonnegative is used. Indeed, in a purely random walk, the analogous statements to the other three lemmas are true, but since $s_n^2$ is generally of order $n$ in a purely random walk, $\sum s_n^2/(n^2\log n)$ would be on the order of the divergent sum $\sum 1/(n\log n)$.

We now deduce Theorem~\ref{thm:algorithm-works} from these lemmas.

\bigskip

\noindent\begin{proof}[Proof of Theorem~\ref{thm:algorithm-works}]
We show that the probability that the algorithm chooses the wrong player at any given step is $0$.

Firstly, Lemma~\ref{lemma:random-walk-divergence} and Lemma~\ref{lemma:variance-convergence} show that the algorithm chooses the wrong player on Steps 1 and 2 with probability $0$.

By Lemma~\ref{lemma:random-increases-square}, the probability that no player is chosen in Steps 1, 2, and 3 combined is $0$. Thus the algorithm chooses the wrong player on Step 4 with probability $0$.

It remains to bound the probability that the algorithm chooses the wrong player on Step 3. By Lemma~\ref{lemma:random-increases-square}, if A is Honest, then the algorithm fails on Step 3 with probability $0$.

Suppose B is Honest. Then by Lemma~\ref{lemma:square-doesn't-increase}, if we reach Step 3, then $\displaystyle\sum_{n=2}^{\infty}\frac{s_n^2}{n^2\log n}$ converges.
Notice that $\frac{1}{x\log x}$ has derivative $(-1+o(1))\frac{1}{x^2\log x}$, so
\[\frac{1}{(n-1)\log(n-1)}-\frac{1}{n\log n}=(1+o(1))\frac{1}{n^2\log n}.\]
Since %$s_n\geq 0$ and the series $\sum s_n^2/(n^2\log n)$ is thus absolutely convergent, 
the series $\sum s_n^2/(n^2\log n)$ \color{black} converges, \color{black}
we may substitute (dropping the $n=2$ term) to obtain that
\begin{equation}\label{eqn:partial-summation}
\displaystyle\sum_{n=3}^{\infty}\left(\frac{1}{(n-1)\log(n-1)}-\frac{1}{n\log n}\right)s_n^2
\end{equation}
converges. 
\color{black}
Regrouping 
\color{black}
terms, we obtain that
\begin{equation}\label{eqn:partial-summation-2}
\displaystyle\sum_{n=2}^{\infty}\frac{s_{n+1}^2-s_n^2}{n\log n}
\end{equation}
converges, as its partial sums differ from those of expression (\ref{eqn:partial-summation}) by $\frac{s_2^2}{2\log 2}-\frac{s_{n+1}^2}{(n+1)\log (n+1)}$, which is bounded because $s_n=O(\sqrt{n}\log\log n)$ (otherwise a player would have been chosen on Step 1).

Almost surely either a player was chosen on Step 1 or 2 or the sum of just the odd-numbered terms (given by B's moves) of expression (\ref{eqn:partial-summation-2}) diverges to $\infty$, by Lemma~\ref{lemma:random-increases-square}. In the latter case, the sum of the even-numbered terms must diverge to $-\infty$ (as the sum of all terms is convergent), and therefore cannot diverge to $\infty$. Thus player B is chosen on this step with probability $0$, finishing the proof.
\end{proof}

\bigskip

We now prove the various lemmas.

\bigskip

\begin{proof}[Proof of Lemma~\ref{lemma:random-walk-divergence}]
As Honest's partial sums form a truly random walk, this follows from the Law of the Iterated Logarithm.
\end{proof}

\bigskip

\begin{proof}[Proof of Lemma~\ref{lemma:variance-convergence}]
Suppose without loss of generality that A is Honest. Let $X_n=\frac{a_n}{\sqrt{n}(\log n)^{3/4}}$ for $n\geq 2$. Since
\[\displaystyle\sum_{n=3}^{\infty}\E(X_n^2)=\displaystyle\sum_{n=3}^\infty\frac{1}{n(\log n)^{3/2}}
%\leq \displaystyle\int_2^{\infty}\frac{1}{x(\log x)^{3/2}}dx=\left.-2(\log x)^{-1/2}\right|_2^{\infty}
<\infty,\]
Kolmogorov's two-series theorem (e.g., \cite[Theorem 2.5.6]{Durrett}) implies that $\displaystyle\sum_{n=2}^{\infty} X_n$ converges almost surely. Thus A is chosen incorrectly on Step 2 with probability $0$.
\end{proof}

\bigskip

\begin{proof}[Proof of Lemma~\ref{lemma:square-doesn't-increase}]

Since no player was chosen on Step 1, $s_{2n-1}=1+A_n+B_{n-1}$ and $s_{2n}=1+A_n+B_n$ are both $o\left(\sqrt{n}(\log n)^{1/4}\right)$, so $s_n=o(\sqrt{n}(\log n)^{1/4})$.

Since no player was chosen on Step 2,
\[\displaystyle\sum_{n=2}^{\infty}\frac{a_n+b_n}{\sqrt{n}(\log n)^{3/4}}\]
converges. We may rewrite this sum using the fact that $a_n+b_n=s_{2n}-s_{2n-2}$ to obtain
\[
\displaystyle\sum_{n=2}^{\infty}\frac{s_{2n}-s_{2n-2}}{\sqrt{n}(\log n)^{3/4}}=O(1) + \displaystyle\sum_{n=2}^{\infty}\left(\frac{1}{\sqrt{n}(\log n)^{3/4}}-\frac{1}{\sqrt{n+1}(\log (n+1))^{3/4}}\right)s_{2n}.
\]
This last \color{black}
regrouping 
\color{black}
is possible because the difference in the partial sums of the two sides is $\frac{s_{2n}}{\sqrt{n+1}(\log (n+1))^{3/4}}$, which approaches $0$ as $s_n=O(\sqrt{n}(\log n)^{1/4})$.

Let $c_n=\frac{1}{\sqrt{n}(\log n)^{3/4}}-\frac{1}{\sqrt{n+1}(\log (n+1))^{3/4}}$. Since the derivative of the function $\frac{1}{\sqrt{x}(\log x)^{3/4}}$ is
\[\left(-\frac{1}{2}+o(1)\right)\frac{1}{x^{3/2}(\log x)^{3/4}}\]
as $x\to\infty$, we have $c_n=\Theta\left(\frac{1}{n^{3/2}(\log n)^{3/4}}\right)$. Since $\sum c_ns_{2n}$ converges and $s_n\geq 0$ for all $n$,
\[\displaystyle\sum_{n=2}^{\infty}\frac{s_{2n}}{n^{3/2}(\log n)^{3/4}}\]
must converge as well. Since $|s_{2n-1}-s_{2n}|\leq 1$ and $\sum n^{-3/2}(\log n)^{-3/4}$ converges,
\begin{equation}\label{eqn:even-and-odd}
\displaystyle\sum_{n=2}^{\infty}\frac{s_{2n-1}+s_{2n}}{n^{3/2}(\log n)^{3/4}}
\end{equation}
converges. Now, the coefficient of $s_n$ in expression (\ref{eqn:even-and-odd}) is $(2\sqrt{2}+o(1))n^{-3/2}(\log n)^{-3/4}$ for $n\geq 3$, so finally the sum
\begin{equation}\label{eqn:sum-sn}\displaystyle\sum_{n=2}^{\infty}\frac{s_n}{n^{3/2}(\log n)^{3/4}}\end{equation}
must converge.

Since $s_n=o(\sqrt{n}(\log n)^{1/4})$, we may multiply each term in expression (\ref{eqn:sum-sn}) by $\frac{s_n}{\sqrt{n}(\log n)^{1/4}}$ and retain convergence, using the nonnegativity of $s_n$. Thus $\displaystyle\sum_{n=1}^{\infty}\frac{s_n^2}{n^2\log n}$ converges.
\end{proof}

\begin{remark}[\color{black} The positivity of $(s_n)$]
The last step of the proof of Lemma~\ref{lemma:square-doesn't-increase} is the \color{black} only \color{black} location where the positivity of $s_n$ is used. Indeed, $\sum s_n/(n^{3/2}(\log n)^{3/4})$ would converge with high probability for a random walk as well. It is only because all $s_n$ are positive that this is surprising.
\end{remark}
\begin{proof}[Proof of Lemma~\ref{lemma:random-increases-square}]
We prove the lemma under the assumption that A is Honest; the proof is analogous when B is Honest.

Notice that
\[\displaystyle\sum_{n=1}^{\infty}\frac{s_{2n+1}^2-s_{2n}^2}{2n\log(2n)}=\displaystyle\sum_{n=1}^{\infty}\frac{1+2a_{n+1}s_{2n}}{2n\log(2n)}.\]
Now, $\sum 1/(2n\log(2n))$ diverges to $\infty$ (at rate on the order of $\log\log n$). It thus suffices to show that the probability that no player is chosen on Step 1 and
\begin{equation}\label{sn-martingale}
\displaystyle\sum_{n=1}^{\infty}\frac{a_{n+1}s_{2n}}{n\log(2n)}
\end{equation}
diverges is $0$.

Let $s'_n=\min(s_n,\sqrt{n}\log\log n)$. Notice that if no player is chosen on Step 1, $s'_n=s_n$ for all sufficiently large $n$. Thus the probability that no player is chosen on Step $1$ and expression (\ref{sn-martingale}) diverges is at most the probability that $\displaystyle\sum_{n=1}^{\infty}\frac{a_{n+1}s'_{2n}}{n\log(2n)}$ diverges.

Let $D_n=\frac{a_{n+1}s'_{2n}}{n\log(2n)}$. Since $a_{n+1}$ is chosen at random after $s'_{2n}$ is already fixed, it 
follows that $D_n$ is a sequence of martingale differences. 
Since $s_n'\leq\sqrt{n}\log\log n$, it follows that
\[\displaystyle\sum_{n=1}^{\infty}\E(D_n^2) \leq \displaystyle\sum_{n=1}^{\infty}\frac{1}{n(\log n)^{3/2}}+O(1)<\infty.\] 
Therefore, the partial sums of the infinite series $\sum_{n=1}^{\infty}D_n$ form a martingale bounded in $L^2$, and therefore the series converges almost surely.
\end{proof}

\color{black}
We turn to handle the example when the horizon is finite.
If we only have a finite number of samples, say $N$, we can guarantee a probability of failure $\epsilon\to 0$ as $N\to\infty$ by modifying the test as follows.

\begin{enumerate}
\item If $\frac{A_n}{\sqrt{n}\log \log n}> \epsilon$ for any $n$ with $\log\log N<n\le N$, choose player A. Otherwise, if $\frac{B_n}{\sqrt{n}\log \log n}> \epsilon$ for any $n$ with $\log\log N<n\le N$, choose player B.
\item Take $C=\displaystyle\sum_{n=1}^{\infty}\frac{1}{n(\log n)^{3/2}}$. If $\displaystyle\sum_{n=2}^{N}\frac{a_n}{\sqrt{n}(\log n)^{3/4}}>\sqrt{3C/\epsilon}$, choose player A. Otherwise, if $\displaystyle\sum_{n=2}^{N}\frac{b_n}{\sqrt{n}(\log n)^{3/4}}>\sqrt{3C/\epsilon}$, choose player B.
\item If $\displaystyle\sum_{n=1}^{\left\lfloor\frac{N-1}{2}\right\rfloor}\frac{s_{2n+1}^2-s_{2n}^2}{2n\log(2n)}>(\log\log N)/3$, choose player B. 

Otherwise, if $\displaystyle\sum_{n=2}^{\left\lfloor\frac{N}{2}\right\rfloor}\frac{s_{2n}^2-s_{2n-1}^2}{(2n-1)\log (2n-1)}>(\log\log N)/3$, choose player A.
\item Otherwise, choose player A.
\end{enumerate}

The next result provides the analog of Theorem~\ref{thm:algorithm-works} to the finite horizon set up.

\begin{proposition}
    If $N$ is sufficiently large in terms of $\epsilon$, the algorithm above identifies the deviator with failure probability at most $\epsilon$.
\end{proposition}

\begin{proof}
The proof is almost identical to the proof of Theorem~\ref{thm:algorithm-works}, 
so we briefly outline the changes. For sufficiently large $N$, the probability of failure at Step $1$ is at most $\epsilon/3$. Similarly, the probability of failure at Step $2$ is also at most $\epsilon/3$ (using a second moment bound and Markov's inequality). If both Steps $1$ and $2$ are passed, then
\[\displaystyle\sum_{n=\log\log N}^{N}\frac{s_n^2}{n^2\log n}\]
is bounded by an absolute constant by a similar argument to earlier, so
\[\displaystyle\sum_{n=1}^{N}\frac{s_n^2}{n^2\log n}=o(\log\log N).\]
For the same reason as in the original argument, a player will be chosen in Step 3 with probability approaching $1$ as $N\to\infty$ (independently of $\epsilon$), and adding the sums yields (up to a constant) $\displaystyle\sum_{n=1}^N\frac{s_n^2-s_{n-1}^2}{n\log n}\approx\displaystyle\sum_{n=1}^{N}\frac{s_n^2}{n^2\log n}=o(\log\log N)$, so only one player is chosen during this step. 
If, e.g., $A$ is incorrectly chosen as the deviator, a similar computation to earlier yields that the partial sum of the series (\ref{sn-martingale}) up to $N$ is $\Omega(\log\log N)$, which occurs with a probability approaching $0$ as $N\to\infty$. 
Hence, choosing $N$ sufficiently large, the deviator is incorrectly identified with probability at most $\epsilon/3$ in this step. 
In conclusion, the total probability of failure is at most $\epsilon$.
\end{proof}
\color{black}

\section{Proof of Theorem~\ref{theorem:approx:testable}}
\label{sec:proof}

In this section we prove Theorem~\ref{theorem:approx:testable}
via a game-theoretic approach.
Roughly speaking, we consider a zero-sum game between an adversary and a statistician, in which the adversary chooses a deviation and the statistician, after observing the realization $s$, has to guess the deviator if $s\notin D$. A strategy for the statistican in this game is a blame function. We use the minimax theorem to establish that the statistician has a strategy that guarantees high payoff. 
However, for the minimax theorem to apply, we need to make some modifications to the game.\footnote{Blackwell~\cite{B} gives an example of a statistical game without a value.}

First, we may assume without loss of generality that $D$ is closed. 
Indeed, 
%since $A^\dN$ is a locally compact Hausdorff space with a countable base for its topology,
%$\prob_{\sigma^*}$ is regular: 
every probability distribution over $A^\dN$ is regular, so every Borel set $D$ with $\prob_{\sigma^\ast}(D)>1-\ep$ contains a closed subset $F$ with $\prob_{\sigma^\ast}(F)>1-\ep$.
Hence, if $D$ is not closed, we can replace it by $F$. 

For every $z=(z^1,z^2,\dots,z^n)\in A^{<\dN}$ denote by $[T_z]=\{s\in A^\dN:s^k=z^k\text{ for }1\le k\le n\}$ the cylinder set of all realizations with initial segment $z$.
A set in $A^\dN$ is open if and only if it is a union of cylinder sets.

Since the set $D$ is closed, its complement $D^c$ is open, and therefore 
there is a set $Z \subseteq A^{<\dN}$ that satisfies the following properties:
\begin{itemize}
\item For every two elements in $Z$, none is the prefix of the other.
\item $D^c = \bigcup_{z \in Z} [T_z]$.
\end{itemize}
Since $A^{<\dN}$ is countable, so is $Z$. 
%\bg{Should this be $A^{<\dN}$? $A^{\dN}$ should be uncountable.}

We now consider the following auxiliary zero-sum game $\Gamma(D)$ between an adversary and a statistician:
\begin{itemize}
\item The adversary selects an element of $i \in I$ (a player in the original problem)
and a behavior strategy $\sigma_i \in \Sigma_i$ for that player.
\item Nature chooses a realization $s \in A^\dN$ according to $\prob_{\sigma_i,\sigma^*_{-i}}$.
\item If $s \not\in D$, the statistician is told the element $z \in Z$ such that $s \in [T_z]$. The statistician has then to select an element $j \in I$.
\item The statistician wins if $s \in D$ or $i=j$.
\item The adversary wins otherwise, that is,
if $s \not\in D$ and $i \neq j$.
\end{itemize}

The interpretation of the game is as follows.
The statistician has to detect which player deviated, 
and the adversary tries to cause the statistician to blame an innocent player.
Thus, the adversary's strategy is to select the identity of the deviator $i \in I$ and a strategy for that deviator.
Then Nature chooses a realization $s$ according to the strategy that player~$i$ deviated to and the prescribed strategies of the other players.
If $s\in D$, then the statistician wins.
If $s \not \in D$, then 
the statistician learns the minimal prefix $z$ of the realization all of whose extensions are not in $D$,
and she wins only if she correctly guesses the identity of the deviator  based on this information.

\begin{lemma}
\label{lemma:guarantee}
Let $D$ be a closed set such that $\prob_{\sigma^*}(D) > 1-\ep$.
Then for every strategy of the adversary, the statistician has a response that wins in $\Gamma(D)$ with probability at least $1-\sqrt{(|I|-1)\ep}$. 
\end{lemma}

\begin{proof}
Fix a strategy $(q,\sigma) \in \Delta(I) \times \prod_i\Sigma_i$ of the adversary in $\Gamma(D)$.

Recall that $D^c = \bigcup_{z \in Z} [T_z]$ for some countable set $Z$ of finite realizations with the property that no element of $Z$ is a prefix of a different element of $Z$.

%An element $z = (z^0,z^1,\dots,z^{n-1}) \in Z$ is a finite sequence of action profiles.
%Denote the length of the sequence by $n_z$.
%For each player $i \in I$ and every $n \in \dN$,
%the probability that bit $a_i^n$ is selected given the past play is
%$\sigma_i(a^n_i \mid z^0,z^1,\dots,z^{n-1})$.
For a finite realization $z=(z^1,z^2,\dots,z^{m})\in Z$ let
\[ \ell_i(z) := \prod_{n=1}^m \frac{\sigma_i(z^n_i \mid z^1,z^2,\dots,z^{n-1})}{\sigma^\ast_i(z^n_i \mid z^1,z^2,\dots,z^{n-1})}, \ \ \ \forall i \in I, \]
where $\frac{0}{0} = 1$ and $\frac{c}{0} = \infty$ for $c > 0$.
Then $\ell_i(z)$ is the likelihood ratio of $\sigma_i$ (the deviation strategy of player $i$) over $\sigma^\ast_i$ (the goal strategy of player $i$) under the realized sequence $z$.
We recall that in the general model, 
each player chooses an outcome in all periods, and therefore $\sigma_i(z^n_i \mid z^1,z^2,\dots,z^{n-1})$ is defined for every $n \in \dN$.

%The quantity $\prod_{i \in I} \ell_i(z)$ is the probability that the sequence $z$
%will be realized under $\sigma$,
%so that $\ell_i(z)$ is the contributions of player~$i$ to this probability.

%\bigskip
%\noindent\textbf{Definition of a strategy for Player~B function.}

%Let $\widehat \xi_B : Z \to I$ be a strategy for Player~B in $\Gamma(D)$ that satisfies the following condition:
%For every $z \in Z$, $\widehat \xi_B(z)$ selects $j \in I$ that satisfies
%$\ell_j(z) \geq \ell_i(z)$ for every $i \neq j$.

Note that the probability that a finite realization $z$ will be realized under $(\sigma_i,\sigma_{-i}^*)$
is
$\prob_{\sigma_i,\sigma_{-i}^*}(z)=\ell_i(z)\prob_{\sigma^*}(z)$,
provided $\ell_i(z) < \infty$.
Similarly, the probability that $z$ will be realized under 
$(\sigma_i,\sigma_j,\sigma_{-i,j}^*)$
is
$\prob_{\sigma_i,\sigma_j,\sigma_{-i,j}^*}(z)=\ell_i(z)\ell_j(z)\prob_{\sigma^*}(z)$,
provided $\ell_i(z),\ell_j(z) < \infty$,
where $\sigma_{-i,j}^* = (\sigma^*_k)_{k \in I \setminus \{i,j\}}$.

Consider a pure strategy of the statistician that, after observing a finite realization $z\in Z$, blames a player $j$ whose likelihood ratio is maximal.
For each $j \in I$, denote by $E_j$ the set of all sequences in $Z$
where the statistician blames $j$.
Then
\[ E_j\subseteq \{ z \in Z \colon \ell_j(z)\ge \ell_i(z)\text{ for every } i \neq j\}, \] 
where the inclusion may be strict when for some $z \in Z$ the maximum of $\{\ell_i(z), i \in I\}$ is attained at $j$ together with some other index.

%It follows that
%\begin{equation}
%\label{equ:71}
%\sum_{z \in E'} \prod_{i \in I} \ell_i(z) = \prob_{\sigma}(\cup_{z \in E'} [T_z])
%\leq \prob_{\sigma_1,\sigma_2}(D^c) \leq 1.
%\end{equation}

%\bigskip
%\noindent\textbf{The probability of blaming an innocent is at most $\sqrt\ep$.}
Observe that
\begin{eqnarray}
\nonumber
  \left(\prob_{\sigma_i,\sigma_{-i}^*}(E_j)\right)^2
 &=&\left(\sum_{z \in E_j}\ell_i(z)\prob_{\sigma^*}(z) \right)^2\\
&\leq&
\label{equ:82}
\left(\sum_{z\in E_j}\ell_i(z)^2\prob_{\sigma^*}(z) \right) \cdot \left(\sum_{z \in E_j}\prob_{\sigma^*}(z) \right)\\
\label{equ:83}
&\leq&
\left(\sum_{z \in E_j} \ell_i(z)\ell_j(z)\prob_{\sigma^*}(z)\right) \cdot \left(\sum_{z \in E_j} \prob_{\sigma^*}(z)\right)\\
\label{equ:84}
&=&
\prob_{\sigma_i,\sigma_j,\sigma^*_{-i,j}}(E_j) \cdot 
\prob_{\sigma^*}(E_j) \leq \prob_{\sigma^*}(E_j),
\end{eqnarray}
where Eq.~\eqref{equ:82} holds by the Cauchy-Schwarz Inequality,
Eq.~\eqref{equ:83} holds since $\ell_i(z) \leq \ell_j(z)$ on $E_j$,
and Eq.~\eqref{equ:84} follows from the definitions.
By the Cauchy-Schwarz Inequality once again, it follows that
\begin{eqnarray*}
  \left(\sum_{j \neq i} \prob_{\sigma_i,\sigma_{-i}^*}(E_j)\right)^2 
  &\le& (|I|-1) \cdot  \left(\sum_{j \neq i} \prob_{\sigma_i,\sigma_{-i}^*}(E_j)^2\right)\\
  &\le& (|I|-1) \cdot \left(\sum_{j \neq i} \prob_{\sigma^*}(E_j)\right) \\
  &=& (|I|-1)\cdot \prob_{\sigma^*}(Z)  \le  (|I|-1) \cdot \ep, 
\end{eqnarray*}
and the claim follows.
\end{proof}

\bigskip

We now conclude the proof of Theorem~\ref{theorem:approx:testable}. 
For every $n \in \dN$ 
let $Z_{n} = \{z \in Z \colon  \hbox{length of } z < n\}$,
and let $D_{n} \subseteq A$ be the set whose complement is given by
\[ D_{n}^c = \bigcup\{[T_z] \colon z \in Z_{n}\}. \]
The sequence $(D_{n})_{n \in \dN}$ is a decreasing sequence of closed sets that contain $D$,
and because $D$ is closed, $D = \bigcap_{n \in \dN} D_{n}$.
In particular, $\prob_{\sigma^*}(D_{n}) > 1-\ep$ for every $n$. 
%$n \in \dN$ sufficiently large.
The set of pure strategies of the statistician in the game $\Gamma(D_{n})$ is finite,
as the game ends after $n$ periods.
By a standard minimax theorem, 
%
%\footnote{Von Neumann's Minimax Theorem applies to games with finite sets of pure strategies. The result is easily extended to games in which the set of pure strategies of one of the players is finite, see, e.g., Appendix A7.3 in Luce and Raiffa (1957).}
the game has a value in mixed strategies,
and the statistician has an optimal strategy, $\xi_{n} : Z_{n} \to \Delta(I)$.

By Lemma~\ref{lemma:guarantee},
For every $n \in \dN$, the value of the game $\Gamma(D_{n})$ is at least $1-\sqrt{(|I|-1)\ep}$.
Let $f_{n}:Z_{n}\to I$ be a blame function such that $\color{black} f_{n}(z) \color{black} \in \argmax_{i \in I}\xi_{n}[z]$.
It follows that if $f_{n}(z) \neq i$ then $(\xi_{n}(z))(I \setminus \{i\}) \geq \frac{1}{2}$,
and hence
\[ \prob_{\sigma_i,\sigma^*_{-i}}(D^c_{n} \hbox{ and } \{f_{n} \neq i\}) \leq 2\prob_{\sigma_i,\sigma^*_{-i},\xi_{n}}(D^c_{n} \hbox{ and } \{j \neq i\})
\leq 2\sqrt{(|I|-1)\ep}, \]
for every $i \in I$ and every $\sigma_i \in \Sigma^B_i$.
Abusing notations, we  view $f_{n}$ as a function from $D^c_{n}$ to $I$,
such that for every $z \in Z_{n}$ and every $s \in [T_z]$, we set $f_{n}(s) = f_{n}(z)$.
It follows that $f_{n}$ is a blame function that guarantees to the statistician at least $1-2\sqrt{(|I|-1)\ep}$ in $\Gamma(D_{n})$.

For every $n \in \dN$, the domain of $f_{n}$ is the finite set $Z_{n}$.
By a diagonal argument, 
there is a function $f : D^c \to I$ that is an accumulation point of the sequence $(f_{n})_{n \in \dN}$:
there is a subsequence $(n_k)_{k \in \dN}$ such that for every $z \in Z$ and every $s \in T_{[z]}$,
$f(s)$ is equal to $f_{n_k}(s)$, for all sufficiently large $k \in \dN$.

We argue that $f$ guarantees to the statistician at least $1-2\sqrt{(|I|-1)\ep}$ in $\Gamma(D)$.
Indeed, let $i \in I$ and $\sigma_i \in \Sigma^B_i$ be arbitrary.
Since $D^c = \bigcup_{k \in \dN} D^c_{n_k}$,
\color{black}
\[ \prob_{\sigma_i,\sigma^*_{-i}}(D^c \hbox{ and } \{f \neq i\}) 
= \lim_{k \to \infty} \prob_{\sigma_i,\sigma^*_{-i}}(D^c_{n_k} \hbox{ and } \{f \neq i\}) 
\leq 2\sqrt{(|I|-1)\ep}, \]
\color{black}
and the result follows.

\begin{comment}[The value of the infinite-horizon game]
Instead of studying the truncated games $\Gamma(D_{n})$,
we could have proved that the game $\Gamma(D)$ has a value by 
showing that the statistician's payoff function is upper-semi-continuous
and her strategy space is compact,
and use a general minimax theorem, like \cite[Theorem 4]{Ha}. 
We chose the path above, as it uses the simpler version of von Neumann's minimax theorem.
\color{black}
Moreover, Theorems~\ref{theorem:approx:testable} and~\ref{theorem:testable} are valid also when the set of actions $A$ is countably infinite.
\color{black}
\end{comment}

\section{Concluding Remarks and Open Problems}
\label{sec:discussion}

The constant $\sqrt{|I|-1}$ in Theorem~\ref{theorem:approx:testable} can be improved to $\lceil \ln_2(I) \rceil$, 
by iteratively dividing the set of players who are suspected as possible deviators into two groups, and identifying whether a player in one of the groups deviated.

\color{black}
The example in Subsection~\ref{section:tight} shows that the bound in Theorem~\ref{theorem:approx:testable} is tight up to a constant factor.
As we now argue, the same holds 
in the example described in Subsection \ref{sub11}.
\color{black}
%the quantitative estimate
%provided by the explicit blame function matches the bound 
%ensured by Theorem ~\ref{theorem:approx:testable} up to a constant
%factor. 
Indeed, the set $D$ in this example has probability 
$1-\eps=1-\Theta(\mu^2)$,
and the blame function shows that the corresponding goal
is $\mu=O(\sqrt{\eps})$-testable. It is in fact not difficult to  see that
for this example the goal is not $\delta$-testable for any $\delta$
smaller than some $\Theta(\sqrt{\eps})=\Theta(\mu)$, showing that
the quantitative estimate given in Theorem ~\ref{theorem:approx:testable}
is tight up to a constant factor. Indeed, consider the following
two scenarios. 
\begin{enumerate}
\item
Player A chooses $s_1=1$ and later plays honestly according to the rules.
Player B plays honestly.
\item
Player A plays honestly. Player B  chooses $s_2=1$ and later plays
honestly.
\end{enumerate}
In both scenarios, the probability that $s_1=s_2=1$ is $\Theta(\mu)$.
Moreover, in both scenarios, if indeed $s_1=s_2=1$, then the conditional 
distribution
of $s$ is identical. Therefore, on the subset of $D^c$ consisting of
all sequences $s$ above with $s_1=s_2=1$, the two scenarios are
indistiguishable and
any blame function
chooses one of the players with probability at least 
$1/2$. It thus follows that 
the probability that $s$ lies in this subset and the 
blame function blames the honest player is $\Omega(\mu)$.

Theorem~\ref{theorem:approx:testable} is not constructive.
In Section~\ref{section:example} we described 
two cases where a blame function
could be identified,
and in these cases, especially in the second one,
the blame function is quite involved.
There are other interesting cases where identifying an explicit blame
function looks challenging.
For example, consider the two-dimensional analog of the Example in
Subsection~\ref{subrw}: 
two players control a two-dimensional random
walk,
and the set $D$ is the set of all realizations that visit the origin
infinitely often (or at least once after the initial position). 
The same question can be considered for a random walk on any recurrent 
graph.

It is easy and well known that the probability that a one-dimensional 
honest random walk starting at the origin
never returns to the origin for $N$ steps  
is $\Theta(1/\sqrt N)$.  This implies,
by Theorem~\ref{theorem:approx:testable}, that for
the version of the game considered in Subsection~\ref{subrw},
where the set $D$ consists of all walks that visit the origin 
at least once during the first $N$ steps, there is a blame function 
that errs with probability at most $O(1/N^{1/4})$. The quantitative 
estimate
that can be derived from the explicit proof described in Subsection~\ref{subrw} is far weaker. 
\color{black}
Indeed, consider item 1 in the description following the proof of Lemma~\ref{lemma:random-increases-square}. If A is an honest player, than the probability that for some
fixed $n$, $\frac{A_n}{\sqrt n \log \log n} >1 (> \eps)$ is roughly
$e^{-(\log \log n)^2/2}$. Therefore for, say, $n=\log N$
this probability is roughly
$e^{-(\log \log \log N)^2/2}$ which is (much) larger than
$1/\log N$. It follows that the probability that the honest
player A is blamed in this algorithm is larger than
$1/\log N \gg 1/N^{1/4}$.
\color{black}
It may be interesting to find 
an explicit blame function with a better quantitative performance.
The corresponding question for an $N$-steps two-dimensional random
walk is even more challenging. It is well known (\cite{DE}, see also
\cite{ET}, \cite{Re}) that the probability that 
a standard two-dimensional random walk
does not return to the origin for $N$ steps is $\Theta(1/\log N)$.
Theorem~\ref{theorem:approx:testable} thus shows that the
corresponding goal here is $O(1/(\log N)^{1/2})$-testable. It would
be very interesting to find an explicit description of a blame 
function  demonstrating this bound.

%For this reason, we say that a goal $(\sigma^*,D)$ is \emph{$\delta$-testable for groups}
%if there is a blame function such that for every subset $J \subset I$ of players and every 
%strategy profile $\sigma_J = (\sigma_i)_{i \in J}$,
%we have
%$\prob_{\sigma_J,\sigma_{-J}^*}(D^c \hbox{ and } \{f \not\in J\}) \leq \delta$.
%By using H\"older's inequality instead of the Cauchy-Schwarz inequality, 
%one can adapt our proof of Theorem~\ref{theorem:approx:testable}
%to prove that every goal $(\sigma^*,D)$ is $O(\ep^{1/{(|I|+1)}})$-testable for groups, where $\ep = 1-\prob_{\sigma^*}(D)$.

Theorem~\ref{theorem:approx:testable} states that if there is an agreed upon
strategy profile $\sigma^*$ that reaches some desired target set $D$ with
high-probability,
and if a single player deviates,
then with high probability, 
the identity of the deviator can be found by all players when the target set is not reached.
Such a result calls for applications in the construction of equilibria in
Game Theory.
As mentioned in the introduction, Theorem~\ref{theorem:approx:testable} can be used to provide an alternative proof for
the existence of an $\eps$-equilibrium in repeated games with finitely
many players
each having finitely many actions, and tail-measurable payoffs,
see 
\cite{FS2022}.
Theorem~\ref{theorem:approx:testable} has also been used to prove that in every multiplayer stochastic game with finite state and action spaces, and with bounded, Borel measurable payoff functions,
for every $\ep > 0$ there is a subgame in which an $\ep$-equilibrium exists, see \cite{FS2022b}.

%\color{black}
%In Remark~\ref{remark:multi} we discussed testability when more than a single player is allowed to deviate from $\sigma^*$.
%A reviewer suggested the following variant.
%Suppose there are two players and both can deviate. 
%A deviator wins if the outcome of the game is not in the target set \emph{and} the statistician fails to identify her. 
%Is it possible to identify at least one deviator?
%Note that to properly formulate the problem, one needs to define a %utility for each player that compares winning to being blamed.
%\color{black}

\end{document}